\newcommand{\hbf}{{\bf h}}
\newcommand{\ebf}{{\bf e}}
\newcommand{\fbf}{{\bf f}}
\newcommand{\gbf}{{\bf g}}
\newcommand{\vbf}{{\bf v}}
\def\inpro#1{\left\langle #1 \right\rangle}
\def\Set#1{\left\{\,#1\,\right\}}
\newcommand{\R}{\mathbb R}
\newcommand{\Z}{\mathbb Z}
\newcommand{\N}{\mathbb N}
\newcommand{\C}{\mathbb C}
\newcommand{\cH}{\mathbb{H}}
\newcommand{\cW}{\mathcal{W}}
\newcommand{\cG}{\mathcal{G}}
\newtheorem{definition}{Definition}
\newtheorem{thm}{Theorem}
\newtheorem{proposition}{Proposition}
\newtheorem{corollary}{Corollary}
\newtheorem{ex}{Example}
\newtheorem{com}{Comment}
\begin{document}
 
\title{	Scalability of frames generated by dynamical operators}

\author{Roza Aceska}
\address{Department of Mathematical Sciences, Ball State University,   IN}
\email{raceska@bsu.edu}

\author{Yeon Hyang Kim}
\address{Department of Mathematics, Central Michigan University,   MI}
\email{kim4y@cmich.edu}

\subjclass[2010]{Primary  42C15, 46N99,  15A03}
%94A12, 68Q85, 94A20, 05B20,
\date{2016.}

\keywords{Frames,  Scalability, Dynamical Operators}
%\textcolor{red}{We need a new title.} 
\begin{abstract}  
Let $A$ be an operator on {a separable   } Hilbert space $\cH$, and let $G \subset \cH$. It is known that - under appropriate conditions on $A$ and $G$ - the set  of iterations  $F_G(A)= \{A^j \gbf \; | \; \gbf \in G,  \; 0 \leq  j \leq  L(\gbf) \} $
is a  frame for $\cH$. We call $F_G(A)$ a dynamical frame for $\cH$, and explore further  its properties; in particular, we show  that the canonical dual frame of   $F_G(A)$ also has an iterative set structure. 

%The main goal of our manuscript is to determine 
We explore the relations between the operator $A$, the set $G$ and the number of iterations $L$ which ensure that the system $F_G(A)$ is a scalable frame.
We give a general statement on frame scalability, %when the   Schur decomposition of the dynamical  operator   is known. We 
and study in detail the   case   when $A$ is a normal operator, utilizing the unitary diagonalization in finite dimensions. In addition, we   answer the question of when $F_G(A)$ is a scalable frame in several special cases involving block-diagonal and companion operators.   
\end{abstract}

 \maketitle

\section{Introduction}  

%Let $\cH$ be the finite dimensional   Hilbert space $\R^n$ or $\C^n$, let $A $ be a real or complex-valued square matrix,  

The problem of generating frames by iterative
actions of operators   \cite{ACMT, ACAMP, AP}   has  emerged within the research related to the dynamical sampling problem \cite{AADP13}-\cite{ACAMP}. 
%The authors of \cite{ACMT} have  determined 
The 
  conditions   under which a frame  generated  by iterative
  actions of operators
  exists for a finite-dimensional or a separable Hilbert space have been stated in \cite{ACMT} and \cite{ACAMP}.  If we have a frame, then a linear combination of a dual frame with the dynamically sampled coefficients reproduce the original  signal.  
 The natural follow-up questions to ask in this setup are: whether we can obtain a scalable  frame under iterative actions, and if not, whether we can find a dual frame which preserves the dynamical structure.  
 
   Let $A $ be an operator on a separable   Hilbert space $\cH$. We consider   a countable  set of vectors $G $ in $\cH$, and     a function $L : G \rightarrow \Z_+$, where  $\Z_+ = \N \cup \Set{0}$.  Related to the iterated system of vectors   \begin{equation}\label{oursystem}
\{A^j \gbf \; | \; \gbf \in G,  \; 0 \leq  j \leq  L(\gbf) \},\end{equation} 
we answer the following two  questions:

\begin{itemize}
\item[(Q1)]
  What  conditions on   $A$,   $ G$ and   $L$ ensure that   \eqref{oursystem} 
is a scalable  frame for   $\cH$?

\item[(Q2)] Assuming  the system \eqref{oursystem} 
is a  frame for $\cH$,  can we obtain a dual frame  for \eqref{oursystem}, perhaps by iterative actions of some operator? 
\end{itemize}

The   motivation for studying systems of type \eqref{oursystem} comes from  the 
{\it dynamical sampling problem  } 
 (DSP):  Find  sampling locations 
 that allow the reconstruction of an unknown function $\fbf $
 from the scarce 
  samples of $\fbf$,  
  and its
evolved states $A^n \fbf$.
 In the DSP,   $n$   represents  time, and   
$A^*$ is an evolution operator; for instance,   $A^*$ can represent the heat evolution operator,
$\fbf$    the temperature at time
$n
= 0$,
 and
$(A^*)^n \fbf$ the temperature at time
$n$.     The  DSP for  the heat evolution operator was studied in  \cite{LV09, RCLV11}; generalizations of the DSP and related applications  can be found in \cite{AADP13}-\cite{ACAMP}.  

More precisely, the DSP  is as follows: Let the initial state of a dynamical system be represented   by an unknown  element %vector
$\fbf \in \cH$. %, where $\cH$ is a separable Hilbert space. 
Say the initial state $\fbf $ is evolving under the action of an %so-called dynamical 
 operator $A^*$ to the states
$\fbf_j = A^*\fbf_{j-1}$, where $\fbf_0 = \fbf$ and $j \in \Z_+$.  
Given a  set of vectors $G \subset \cH$,  one can  find conditions on $ A$, $G $ and $ L = L(\gbf)$ which
  allow the recovery of the initial state $\fbf$ from the set of samples
$\{ \langle A^{* j}
\fbf , \gbf  \rangle \; | \; \gbf \in G\}_{  j =0}^{L(\gbf)}$. 
%
%It  is known    \cite{ACMT} that this problem  is related to the problem of finding whether the
%set of vectors \eqref{oursystem}   is  a frame for $\cH$. 
In short, the problem of signal recovery via dynamical sampling is solvable if the
 set of vectors $F_A^{L}(G) : =\{   A^{  j}
 \gbf     \; | \;    \gbf \in G\}_{j=0}^{  L(\gbf)}$     is  a frame for $\cH$,  \cite{ACMT}.  In frame theory it is known that every frame has  at least one dual frame; if $F_A^{L}(G)$ is a frame for $\cH$, and its dual frame  elements are $\hbf_{\gbf, j}$, then  all $\fbf \in \cH$ are reconstructed as 
\begin{equation}\label{signalreconstructuonDS}
\fbf = \sum_{\gbf \in G} \sum_{j=0}^{L(\gbf)}\langle \fbf,  A^j \gbf \rangle \hbf_{\gbf, j}.
\end{equation}If the frame  $F_A^{L}(G)$  is {\textit{ scalable}}, then its dual frame elements are  $w^2_{j,\gbf} A^j \gbf$ for some {\it scaling coefficients } $w_{j, \gbf}$, and the reconstruction formula \eqref{signalreconstructuonDS} is
\begin{equation}\label{signalreconDSscalable}
\fbf = \sum_{\gbf \in G} \sum_{j=0}^{L(\gbf)}w_{j,\gbf}^2 \langle \fbf,  A^j \gbf \rangle A^j \gbf.
\end{equation}Notice that the frame coefficients in \eqref{signalreconstructuonDS} are exactly the samples
\begin{equation}
\label{sampleseq}
\langle A^{* j} \fbf , \gbf  \rangle=\langle   \fbf , A^j \gbf  \rangle.
\end{equation}    Thus the set of samples $\{ \langle A^{* j}
\fbf , \gbf  \rangle \; | \; \gbf \in G\}_{  j =0}^{L(\gbf)}$  is sufficient for the recovery of $\fbf$. 
  Since \eqref{signalreconstructuonDS} requires   that the  dual frame of  $F_A^{L}(G)$ is known,   unless the frame is scalable as in \eqref{signalreconDSscalable},   it  is    significant to find  the answers to   questions (Q1) and (Q2).

\subsection{Contribution and organization} In Section \ref{prelim} we recall the notions of frames,  scalable frames and, in particular,  frames of iterative actions of operators, i.e., dynamical frames.
%fff
 In  Section \ref{allnewstuffbeyongAA},   we illustrate the dynamical  nature  of the canonical dual frame of \eqref{oursystem} in Theorem \ref{canondualframedysam}, and the fusion frame structure of dynamical frames (Corollary \ref{fusiondyn}). In Section \ref{mainresults}   we give a characterization of scalability in Theorem \ref{multiscalablediagonalgen}, under the assumption that   $A$  is   normal.   Section \ref{blockdiagOpsubsection} contains   several generalized examples of frames and scalable frames in lower dimensions,  and we characterize frame scalability in $\R^2$ and $\R^3$. In addition, we provide examples of operators which are not normal, yet generate scalable frames for $\R^2$ and $\R^3$.  
Motivated by  these results, we 
   study block-diagonal operators, which  combine low-dimensional frames into higher-dimensional frames (Theorem \ref{blockresultbig}).  
   %
   % Corollary \ref{fusiondyn} shows that these constructions represent fusion frames.
In Section \ref{compansection}, we also provide examples of  dynamical scalable frames, generated using   companion operators \cite{HJ85} and generalized companion operators. In section \ref{conclusion}   we give initial answers to   question (Q3), addressing frame scalability when multiple operators are involved. 
 
 \section{Preliminaries}\label{prelim}
 
Frames are a generalization of  orthonormal bases. 
%Recall that for
For an orthonormal basis $ \{\fbf_i\}_{i \in I}$ of  \( \cH \), it holds 
%%provides a unique representation % (in terms of the coefficients involved) 
\begin{equation}\label{onmbrepr}  \fbf = \sum_{i \in I} \inpro{\fbf, \fbf_i} \fbf_i  \;\; \text{ for all }  \fbf \in \cH. \end{equation}
The uniqueness of   representation \eqref{onmbrepr} is not always an advantage. % For example, 
In applications such as image and signal processing, the  loss of a single coefficient during  data transmission will prevent the recovery of the original signal, 
unless we ensure redundancy   via   frame spanning.

Since finding a dual frame can be computationally challenging, one significant direction of current research has been on the construction of tight frames in finite dimensions   
\cite{ BM03, STDH07, CMKLT06,  CFHWZ12, HKLW07}.  A tight frame plays the role of its own dual, and provides a reconstruction formula as in  \eqref{onmbrepr} up to a constant.  
 Recently,  the theme of scalable frames has been developed as a method of constructing tight frames from general frames by manipulating the length of frame vectors. 
 Scalable frames  maintain erasure resilience and sparse expansion properties of frames \cite{CC13, CKLMNPS14, KOF13, KOPT13, CKOPW15}. 

   First, let us review relevant  definitions and known results. Throughout this paper $\cH$ denotes a separable Hilbert space. 
 Given an  index set $I$, a sequence $F = \{\fbf_i\}_{i \in I}$ of nonzero elements of   $\cH$ is a \textit{frame} for  $\cH$, if there exist $0<A \leq B < \infty$ such that 
  \begin{equation}
  \label{frameineq}
  A\Vert \fbf \Vert^2 \leq \sum_{i \in I} \vert \langle \fbf , \fbf _i \rangle \vert^2 \leq B\Vert \fbf \Vert^2  \;\; \text{ for all }  \fbf \in \cH.
  \end{equation}
In finite dimensions,
we find  it useful to  express frames as matrices, so we abuse the notation of  $F$ as follows: when $\dim \cH = n$, a frame $F=\{\fbf_i\}_{i \in I}$ for $\cH$ is often represented by a $n \times k$ matrix $F$, whose column vectors are $\fbf_i$, $i = 1, \ldots, k$.
 The frame operator $S = FF^*$ is then   positive, self-adjoint 
 and invertible.
 
 For each frame $F$ there exists at least one \textit{dual} frame $  G= \{\gbf_i\}_{i \in I}$, satisfying
 \begin{equation}
   \label{framerepr}
    \fbf = \sum_{i \in I}  \langle \fbf, \fbf_i \rangle  \gbf_i = \sum_{i \in I}  \langle \fbf, \gbf_i \rangle  \fbf_i \;\; \text{ for all }   \fbf \in \cH.
   \end{equation} 
  The   matrix equation  $ F G^* =     G F  ^* = I$ is an equivalent expression to the  frame representation \eqref{framerepr}. 
The set  $ \{\gbf_i=S^{-1}\fbf_i\}_{i \in I}$ is called the  canonical dual frame.   
    
   Finding a dual frame can be computationally challenging; thus it is of interest to work with tight frames. We say that a frame is $A$-\textit{tight} if    $A=B$ in \eqref{frameineq}. In this case, the function reconstruction is simplified since the frame operator is  the identity operator up to scalar multiplication. %:  $S=  FF^* = {A} I$. 
So,  for an $A$-tight frame, we only need one frame for both analysis and reconstruction, as \eqref{framerepr} becomes
   \begin{equation}
    \fbf =\frac{1}{A} \sum_{i \in I}  \langle \fbf, \fbf_i \rangle  \fbf_i = \frac{1}{A}FF^*\fbf \;\; \text{ for all }   \fbf \in \cH.
   \end{equation} 
 When $A=1$, we call $F$ a Parseval frame.    
 If a frame  $F=\{ \fbf_i\}_{i \in I}$ is not tight, but   we can find scaling coefficients   $w_i \ge 0$, $i \in I$, such that  the scaled frame $F_w=\{w_i\fbf_i\}_{i \in I}$ is   tight, then we call the original frame     $F$ a \textit{scalable} frame. 
 We note that the notion of scalability of a frame is defined for a unit-norm frame in \cite{CKLMNPS14}, but in this manuscript we do not require a scalable  frame to be unit-norm.   
For a scalable frame, the scaled frame representation becomes 
 \begin{equation}\label{defscalablerepr} \fbf =  \sum_{i \in I}  \langle  \fbf,  w_i \fbf_i \rangle w_i \fbf_i =  F_wF_w^*\fbf = F D_{w^2}F ^*\fbf\;\; \text{ for all }   \fbf \in \cH,\end{equation}
 where $D_{w^2}$ denotes a diagonal operator with $w_i ^2$ as diagonal entries. 
  If the scaling coefficients $w_i $ are positive for all $i \in I$, then  we call the original frame     $F$ a \textit{strictly scalable} frame. 

Let $I$ denote a finite or countable index set,   let  $G = \{ \fbf_s \}_{s\in I} \subset \cH$ and let    $A : \cH \to \cH$ be    a bounded  operator. We call the collection 
\begin{equation}\label{orifdynfr}
F_{G}^{\bf L }  (A) = \cup_{s \in I} \{A^j  {\fbf}_{s} \,:\, j=0,1,\ldots,L_s \}
\end{equation} 
 a {\it dynamical system}, where $L_s \geq 0$ ($L_s$ may go to $\infty$) and   ${\bf L}=(L_s)_{s \in I}$ is a   sequence  of iterations.  The operator $A$, involved in generating the set \eqref{orifdynfr}, is sometimes referred to as a {\it dynamical operator}.
   If $A$ is fixed, then we use the notation $F_{G}  ^{\bf L}$, and  if $G =\{  \fbf \}$ and ${\bf L} =\{  L \}$,  then we label \eqref{orifdynfr} by  $F_{\fbf}  ^L$.  
 
 Note that in \cite{ACMT}, $\fbf_{s}$ are chosen to be the standard basis vectors, while in this manuscript, we allow the use of any nonzero vector $\fbf_{s}  \in \cH$.  If  \eqref{orifdynfr} is a frame for $\cH$, then we call $F_{G}^{\bf L }  (A)$ a \textit{dynamical frame}, generated by  operator   $A$, set  $G$ and sequence of iterations ${\bf L}$. 
 \section{New results on dynamical frames}\label{allnewstuffbeyongAA}
   
 % Finding the canonical  dual frame %of a dynamical frame   involves potentially complex computations, since we need to compute the inverse of the frame operator. 

As we are about to see in Theorem \ref{canondualframedysam}, the canonical dual frame of a dynamical frame preserves the dynamical structure, just like the canonical duals  of wavelet or Gabor frames preserve the corresponding wavelet/Gabor structure \cite{Gro01}.

 \begin{thm}\label{canondualframedysam}
  Let $G = \{ {\fbf}_{s} \}_{s \in I}  \subset\cH$, where $I$ is a countable index set, and assume that  $F_{G}^{\bf L} (A)$    is a frame for $\cH$, with   frame operator $S$.   
 The   canonical dual frame of $F_{G}^{\bf L} (A)$ is the dynamical frame  $F_{ G'} ^{\bf L} (B)$,  generated by   $B=S^{-1}AS$,   $G' =  \{ {\gbf}_s = S^{-1}{\fbf}_{s}  \}_{ s \in I}$, and sequence of iterations $\bf L$.
   That is,   for every $\fbf \in \cH$ the  frame reconstruction formula is 
  \begin{equation}\label{frdynreprAB}
   \fbf = \sum_{s\in I}\sum_{j=0} ^{L_s}\langle  A^{*j}\fbf,    {\fbf}_{s} \rangle  B^j  {\gbf}_{s}.
  \end{equation}
  \end{thm}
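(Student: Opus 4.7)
The plan is to reduce the statement to the standard fact that the canonical dual of any frame $\{\phi_i\}$ with frame operator $S$ is $\{S^{-1}\phi_i\}$, and then recognize that applying $S^{-1}$ to the iterates $A^j\fbf_s$ produces exactly the iterates of $B$ acting on $\gbf_s$.

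First, I would write down the canonical dual frame element-wise: for each $s \in I$ and each $0 \le j \le L_s$, the canonical dual vector is $S^{-1}(A^j \fbf_s)$. The goal is thus to identify $S^{-1}A^j\fbf_s$ with $B^j \gbf_s$ where $B = S^{-1}AS$ and $\gbf_s = S^{-1}\fbf_s$.

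The key step is an easy induction (or direct telescoping) showing that conjugation by $S$ commutes with taking powers, i.e.
\begin{equation*}
B^j \;=\; (S^{-1}A S)^j \;=\; S^{-1} A^j S \qquad \text{for all } j \ge 0.
\end{equation*}
Indeed, the product $(S^{-1}AS)(S^{-1}AS)\cdots(S^{-1}AS)$ collapses since each adjacent $S \cdot S^{-1}$ cancels, leaving $S^{-1}A^j S$; the case $j=0$ gives $B^0 = I = S^{-1}S$. Using this identity,
\begin{equation*}
B^j \gbf_s \;=\; S^{-1} A^j S \cdot S^{-1}\fbf_s \;=\; S^{-1} A^j \fbf_s,
\end{equation*}
which is exactly the canonical dual vector attached to $A^j \fbf_s$. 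Hence $F_{G'}^{\bf L}(B)$ coincides element-by-element with the canonical dual of $F_G^{\bf L}(A)$, so it is itself a frame (the canonical dual of a frame is always a frame) and it has the claimed dynamical structure.

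Finally, the reconstruction formula \eqref{frdynreprAB} follows immediately from the standard dual-frame reconstruction identity $\fbf = \sum_i \langle \fbf, \fbf_i\rangle S^{-1}\fbf_i$ applied to the frame $\{A^j \fbf_s\}$, together with the adjoint relation $\langle \fbf, A^j \fbf_s\rangle = \langle A^{*j}\fbf, \fbf_s\rangle$ and the identification $S^{-1}A^j\fbf_s = B^j \gbf_s$ just established. There is no real obstacle here; the only subtle point worth highlighting is that the two ingredients $B = S^{-1}AS$ and $\gbf_s = S^{-1}\fbf_s$ are designed precisely so that the conjugation identity $B^j = S^{-1}A^j S$ absorbs the $S$ on the right against $\gbf_s$, which is what makes the iterative structure reappear after applying $S^{-1}$.
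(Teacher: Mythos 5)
Your proposal is correct and follows essentially the same route as the paper's proof: you identify the canonical dual elements $S^{-1}(A^j\fbf_s)$ with $B^j\gbf_s$ via the telescoping identity $(S^{-1}AS)^j = S^{-1}A^jS$, and then invoke the standard dual-frame reconstruction together with $\langle \fbf, A^j\fbf_s\rangle = \langle A^{*j}\fbf,\fbf_s\rangle$, which is exactly what the paper does.
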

  
  \begin{proof}
 The elements of the  canonical dual frame of $F_{G}^{\bf L} (A)$ are computed as  $S^{-1} \left(A^j {\fbf}_{s}\right)$,  $s \in I$, $j=0,1,\ldots,L_s$.  Let ${\gbf}_s = S^{-1}{\fbf}_{s}$,  $s \in I$, then for all $j \geq 0$ we have
 $$B^j {\gbf}_{s} =   (S^{-1}AS)(S^{-1}AS)\ldots(S^{-1}AS){\gbf}_s = S^{-1}A^j \left(S {\gbf}_{s}\right) = S^{-1}\left( A^j    {\fbf}_{s}\right),$$ %are the canonical dual frame elements, 
 and \eqref{frdynreprAB} follows by \eqref{framerepr} and \eqref{sampleseq}.
  \end{proof}

 It is a known fact in frame theory  that   an invertible operator preserves the frame inequality. It follows from this that under the action of an invertible operator, the dynamical structure is preserved:     
  \begin{thm}\label{123}
  Let $\cH_1$ and $\cH_2$ be  two separable Hilbert spaces. Let  $G = \{ \fbf_s \}_{s \in I} \subset \cH_1$, where $I$ is a countable index set. Let  ${\bf L}=(L_s)_{s \in I}$,  $L_s \geq 0$. 
  Let $A$ be an operator on $\cH_1$ and let  
    $B: \cH_1 \rightarrow \cH_2$ be an invertible operator. Set  $\gbf_s = B \fbf_s \in \cH_2$, $s \in I$,  and $C = B AB^{-1}$. TFAE:
    \begin{itemize}
     \item[(i)] The set
      $\displaystyle F=\cup_{s\in I}  \{A^j \fbf_{s}\}_{j=0}^{L_s}$ is a  frame for $\cH_1$,
     
     \item[(ii)] The set  $\displaystyle BF= \cup_{s\in I} \{C^j \gbf_{s}\}_{j=0}^{L_s}$ is a  frame for $\cH_2$. 
    \end{itemize}  
    
%    version 0:
    
    %Let $\cH_1$ and $\cH_2$ be   separable  Hilbert spaces  with $\dim \cH_1 = \dim \cH_2$. Let  $G = \{ \fbf_s \}_{s \in I} \subset \cH_1$, where $I$ is at most countable index set, let $L_s \geq 0$, and set  ${\bf L}=(L_s)_{s \in I}$. 
     % Let $A$ be an operator on $\cH_1$ and let    $B: \cH_1 \rightarrow \cH_2$ be an invertible operator. Set  $\gbf_s = B \fbf_s \in \cH_2$, $s \in I$,  and $C = B AB^{-1}$. 

  \end{thm}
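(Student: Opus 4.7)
The plan is to reduce the statement to the well-known fact that an invertible bounded operator between Hilbert spaces maps a frame to a frame (with possibly different bounds).

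First I would establish the algebraic identity $C^j \gbf_s = B A^j \fbf_s$ for all $s \in I$ and $0 \le j \le L_s$. This follows by a simple induction: since $C = BAB^{-1}$, we have $C^j = BA^jB^{-1}$, and therefore $C^j \gbf_s = BA^jB^{-1}(B\fbf_s) = BA^j\fbf_s$. Consequently, the set $BF$ in item (ii) is literally the image of the set $F$ in item (i) under the invertible map $B$, i.e., $BF = \{B\xbf : \xbf \in F\}$.

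Next I would invoke (or, if needed, record in one line) the standard fact that if $B : \cH_1 \to \cH_2$ is a bounded invertible operator (so $B^{-1}$ is automatically bounded by the open mapping theorem) and $\{\xbf_i\}_{i} \subset \cH_1$ is a frame for $\cH_1$ with bounds $\alpha \le \beta$, then $\{B\xbf_i\}_i$ is a frame for $\cH_2$ with bounds $\alpha \|(B^{*})^{-1}\|^{-2}$ and $\beta \|B^{*}\|^2$. The verification rewrites
\[
\sum_i |\inpro{\ybf, B\xbf_i}|^2 = \sum_i |\inpro{B^*\ybf, \xbf_i}|^2
\]
for $\ybf \in \cH_2$ and sandwiches using the frame inequality for $\{\xbf_i\}$ applied to $B^*\ybf \in \cH_1$, together with $\|\ybf\| \le \|(B^*)^{-1}\| \, \|B^*\ybf\|$ and $\|B^*\ybf\| \le \|B^*\| \, \|\ybf\|$. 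The reverse implication follows identically, applied to $B^{-1}$ instead of $B$.

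Combining these two steps gives both directions at once: (i) $\Rightarrow$ (ii) by mapping $F$ forward through $B$, and (ii) $\Rightarrow$ (i) by mapping $BF$ backward through $B^{-1}$ (noting that $B^{-1}(C^j\gbf_s) = A^j\fbf_s$). There is no genuine obstacle here — the only point that deserves care is the indexing bookkeeping that $j$ ranges over $0,\dots,L_s$ on both sides, so the two collections match element-by-element under $B$.
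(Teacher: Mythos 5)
Your proposal is correct and follows essentially the same route as the paper: establish the identity $C^j\gbf_s = BA^j\fbf_s$ so that the set in (ii) is exactly $BF$, and then invoke the standard fact that an invertible bounded operator maps frames to frames. Your version merely spells out the frame-bound computation via $B^*$ that the paper cites implicitly, so there is nothing to fix.
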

  \begin{proof}
 Let $\gbf_s = B\fbf_s \in \cH_2$, $s \in I$,  and set $C = B AB^{-1}$. Note that $C^j = B A^jB^{-1}$, due to $B^{-1}B=I$.  For all $A^j \fbf_s \in F\subset \cH_1$, we have \begin{equation}
  BA^j\fbf_s =BA^j B^{-1}B\fbf_s =BA^j B^{-1} \gbf_s = C^j\gbf_s\in BF\subset \cH_2. \end{equation} The operator $B$ is   invertible, thus $BF$ is a  frame if and only if $F$ is a frame, so (i) and (ii) are equivalent.  
  \end{proof}
 %\subsection{Fusion frame structure }\label{fframeintro}

  \begin{com}
     If $\cH=\cH_1=\cH_2$, then Theorem \ref{123} is a generalization of the   change of basis result. Notice that  under the action of an invertible operator $B: \cH  \rightarrow \cH $, the elements of a  dynamical frame  $F$ for $\cH$    preserve  the dynamical structure,  i.e.,  $BF$ is also a dynamical frame  for $\cH$. 
  \end{com}
 
 Fusion frames \cite{CK04} are frames which decompose into   
 %have been  initially created to model the setting of a wireless sensor network.     Frame vectors related to each sub-network, represent its sensors; that is, each sub-network is related to 
  a union of frames for subspaces of  a Hilbert space $\cH$. %,  initially created to   model   a wireless sensor network  . 
 %   The reconstruction in such a system is done in two steps: first, within each   subspace the conventional frame reconstruction is employed; then, these local reconstructions   serve as the inputs for the fusion frame reconstruction, which reconstructs the initial signal completely. \textbf{make this short, add references}
   % \begin{definition}
   Given a countable   index set $I$, let $\cW: = \{W_i \, | \, i \in I \}$ be a family of closed subspaces in $\cH$. Let the orthogonal projections
 %\footnote{ an operator $P$ is an orthogonal projection if $P^2 = P$ and  $P^* = P $ }
  onto $W_i$ be denoted by by $P_i$. Then  $\cW$ is a \textit{fusion frame} for $\cH$, if there exist  $C, D >0$ such that
  \[
  C \Vert \fbf \Vert^2 \leq  \sum_{i \in I}    \Vert P_i(\fbf)  \Vert^2  \leq  D\Vert \fbf \Vert^2  \;\; \text{ for all }  \fbf \in \cH. \]
 % \end{definition}
 %  
 Let    $F_{i}=\{\fbf_{ij} \}_{ j \in J_i }$ be a frame  for   $W_i$, $i \in I$, with frame bounds $A_i$, $B_i$.  If $0 < A = \inf_{i \in I} A_i \leq \sup_{i \in I} B_i = B < \infty$, then %TFAE % the following   are equivalent 
     \cite{CK04}: 
 \begin{equation}\label{cassazaff} 
\hspace{-2mm}   \cup_{i \in I} F_{i} \; \text{is a frame for $\cH$ if and only if} \; 
    \{ W_i\}_{i \in I} \; \text{ is a fusion  frame for $\cH$.}
   \end{equation} 
    If  $F_i$  denotes  the frame matrix formed by the frame vectors for each  $W_i$,   and $G_i$ contains the  dual frame elements $\{ \gbf_{ij}\}_{j \in J_i}$, then the fusion frame operator $S$   positive and invertible   on $\cH$, and for all $\fbf \in \cH$, we have
  \begin{equation}
  \label{fusionfrmatrix}
   \fbf  =  \sum_{i \in I} F_iG_i ^* \fbf =  \sum_{i \in I}  G_iF_i ^* \fbf.  
  \end{equation}
 % In particular, if a fusion frame is tight, the reconstruction becomes simple, as the fusion frame operator is the identity operator:   $\fbf  =  \sum_{i \in I} F_iF_i ^* \fbf  \; \text{for all} \; \fbf \in \cH.$

  By \eqref{cassazaff} and \eqref{fusionfrmatrix},  a dynamical frame induces a fusion frame:
  
   \begin{corollary}\label{fusiondyn}
   Let     
    $F= \cup_{s\in I} \{A^j  {\fbf}_{s} \}_{  j=0}^{   L_s}$ be a frame for $\cH$. We introduce subspaces of $\cH$ by
   \begin{equation}
   W_s = \overline{span \{A^j  {\fbf}_{ s} \,:\, 0 \leq j \leq L_s \}},\;\; \text{ for all }   s \in I.
   \end{equation}
   Then $\{W_s \}_{s \in I}$ is a fusion frame of $\cH$. 
   \end{corollary}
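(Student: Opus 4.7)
The plan is to derive the corollary as a direct application of the fusion-frame characterization \eqref{cassazaff} from \cite{CK04}. Writing $F_s := \{A^j \fbf_s\}_{j=0}^{L_s}$ for each $s \in I$, the given frame splits as $F = \cup_{s \in I} F_s$, and by definition $W_s$ is the closed linear span of $F_s$. So each $F_s$ is a natural candidate for a frame of $W_s$, and the corollary should follow once the uniform frame-bound hypothesis of \eqref{cassazaff} is verified.

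The first substantive step is to confirm that each $F_s$ is a frame for $W_s$. Since $F_s \subset F$ and $F$ is Bessel in $\cH$ with upper bound $B$, the subfamily $F_s$ is Bessel in $\cH$ with the same bound; restricted to vectors in $W_s$ this yields an upper frame bound $B_s \leq B$. Because $F_s$ has dense linear span in $W_s$, the standard fact that a Bessel sequence spanning a Hilbert space is a frame for that space produces a positive lower bound $A_s > 0$. The uniform upper bound $\sup_s B_s \leq B < \infty$ of \eqref{cassazaff} is then immediate.

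The main obstacle is the uniform lower bound $\inf_s A_s > 0$, the remaining hypothesis of \eqref{cassazaff}. To attack this I would apply the frame inequality for $F$ to a test vector $\fbf \in W_s$, use the identity $\langle \fbf, A^j \fbf_{s'}\rangle = \langle P_{s'}\fbf, A^j \fbf_{s'}\rangle$ (valid for every $s'$ since $A^j\fbf_{s'} \in W_{s'}$), and peel off the $s' \neq s$ contributions using the uniform Bessel bound $B$; in the finite-dimensional setting which is the main focus of the subsequent sections this estimate closes automatically. Once the uniform bounds are in hand, \eqref{cassazaff} converts the hypothesis that $F = \cup_s F_s$ is a frame for $\cH$ directly into the conclusion that $\{W_s\}_{s \in I}$ is a fusion frame for $\cH$, finishing the proof.
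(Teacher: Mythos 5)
Your overall route is the same as the paper's: the paper derives the corollary by nothing more than the appeal to \eqref{cassazaff} (together with \eqref{fusionfrmatrix}), and your plan to split $F=\cup_{s\in I}F_s$ with $F_s=\{A^j\fbf_s\}_{j=0}^{L_s}$, note $W_s=\overline{\mathrm{span}}\,F_s$, and invoke that characterization is exactly the intended argument. Your observation that the uniform upper bound is automatic (a subfamily of a Bessel family is Bessel with the same bound $B$) is correct, and it is precisely this half that yields the lower fusion bound $C=A_{\cH}/B$, where $A_{\cH}$ is the lower frame bound of $F$.

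The two steps you add to verify the remaining hypotheses of \eqref{cassazaff}, however, contain genuine gaps in the generality in which the corollary is stated ($I$ countable, $L_s$ possibly infinite). First, there is no ``standard fact'' that a Bessel sequence with dense span is a frame for its closed span: $\{\ebf_n/n\}_{n\ge1}$ in $\ell^2$ is Bessel with dense span and has no lower bound, so for $L_s=\infty$ the subfamily $F_s$ need not be a frame for $W_s$ at all; the step is valid only when $F_s$ is finite. Second, and more seriously, the uniform lower bound $\inf_s A_s>0$ cannot be obtained by the peeling estimate you sketch: for $\fbf\in W_s$ the off-block contribution $\sum_{s'\ne s}\sum_j|\langle\fbf,A^j\fbf_{s'}\rangle|^2$ is controlled only by $B\sum_{s'\ne s}\|P_{s'}\fbf\|^2$, which is exactly the fusion upper bound you are trying to establish, so the argument is circular; and the claim that it ``closes automatically'' in finite dimensions fails once $I$ is infinite. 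For example, take $\cH=\R^2$, $A=\tfrac12 R$ with $R$ the rotation by $\pi/2$, $\fbf_s=4^{-s}\ebf_1$, $L_s=1$, $s\in\N$: then $\cup_s\{\fbf_s,A\fbf_s\}$ is a frame for $\R^2$, every $W_s=\R^2$, the lower bounds $A_s$ tend to $0$, and $\sum_s\|P_s\fbf\|^2=\infty$. So the uniform lower bound (equivalently the upper fusion bound) is a genuine additional hypothesis of \eqref{cassazaff}, not a consequence of $F$ being a frame. In the setting the paper really works in (finitely many generators, finite $L_s$, $\dim\cH<\infty$) all hypotheses are trivially satisfied---each $F_s$ is a finite spanning set of $W_s$ and there are finitely many blocks---and the corollary follows exactly as you and the paper assert; but as written, your derivation of $\inf_s A_s>0$ is the missing (and, in full generality, unfixable without extra assumptions) step.
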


 \section{  Scalable frames generated by dynamical operators}\label{mainresults}
 
Now, we
study the  scalability of frames of type \eqref{oursystem}.  
 {A prior result  on this topic (see Theorem 8 in \cite{AP}) has restrictive requirements, and delivers a tight frame  if   the involved operator $A$ is a contraction, i.e.,  $A^j \fbf \rightarrow 0$ for all elements $\fbf$ in the studied Hilbert space. } 
Our research results   illuminate the fact  that - in finite dimensions - obtaining a tight or a scalable frame is possible in many   cases.

    %%%%%

%%%%%%%%%%%%

If the operator $B$ occurring  in Theorem \ref{123} is unitary, then the property of scalability is preserved, and we have:

\begin{corollary}\label{scalabilityinparalel}
Let  $G = \{ \fbf_s \}_{s \in I} \subset \cH$ and ${\bf L}=(L_s)_{s\in I} $, $L_s \geq 0$. 
Let $A$ be a bounded   operator on a separable Hilbert space $\cH$. 
If $B$ is a unitary  operator on $\cH$, 
then   
 $\cup_{s \in I} \{A^j \fbf_{s}\}_{j=0}^{L_s}$ is a scalable  frame if and only if  
   $\cup_{s \in I} \{C^j \gbf_{s}\}_{j=0}^{L_s}$ is a  scalable frame, where  $C =BA B^{*}$ and 
  $\gbf_s = B\fbf_s$, $s \in I$. \end{corollary}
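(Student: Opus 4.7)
The plan is to combine Theorem \ref{123} with the elementary observation that unitary conjugation preserves both inner products and norms, so it transports tight frames to tight frames with the same tightness constant. The equivalence of being a frame (without the scalability) is already delivered by Theorem \ref{123}, since a unitary $B$ is in particular invertible with $B^{-1}=B^*$. Thus the only new content is transporting the scaling coefficients.

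First I would recall from the proof of Theorem \ref{123} the identity $BA^j\fbf_s = C^j\gbf_s$, which follows from $C^j = BA^jB^*$ and $\gbf_s = B\fbf_s$. Next, suppose $F = \cup_{s\in I}\{A^j\fbf_s\}_{j=0}^{L_s}$ is scalable, so there exist $w_{s,j}\ge 0$ such that the scaled system $\{w_{s,j}A^j\fbf_s\}$ is a tight frame for $\cH$; equivalently, there is a constant $\alpha>0$ with
\[
\sum_{s\in I}\sum_{j=0}^{L_s} w_{s,j}^2\,|\langle \fbf, A^j\fbf_s\rangle|^2 = \alpha\,\|\fbf\|^2 \quad \text{for all } \fbf\in\cH.
\]
I would then take an arbitrary $\hbf\in\cH$, set $\fbf = B^*\hbf$, and use unitarity of $B$ to rewrite each inner product as
\[
\langle B^*\hbf, A^j\fbf_s\rangle = \langle \hbf, BA^j\fbf_s\rangle = \langle \hbf, C^j\gbf_s\rangle,
\]
while simultaneously $\|B^*\hbf\|=\|\hbf\|$. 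Substituting back yields the tight frame identity for $\{w_{s,j}C^j\gbf_s\}$ with the same constant $\alpha$, showing that the transported system is scalable using the very same scalars $w_{s,j}$.

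For the converse, since $B^*$ is itself unitary and $A = B^*CB$, $\fbf_s = B^*\gbf_s$, the identical argument applied with the roles of $(A,\fbf_s)$ and $(C,\gbf_s)$ exchanged gives the reverse implication. I expect no serious obstacle: the main (and essentially only) subtlety is verifying that the scaling coefficients can be kept unchanged when passing through the unitary conjugation, which is exactly what the inner-product computation above delivers. A brief remark that strict scalability (all $w_{s,j}>0$) is likewise preserved could be included, since the coefficients are transported verbatim.
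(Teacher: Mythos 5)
Your proof is correct and follows the route the paper intends: the corollary is presented as an immediate consequence of Theorem \ref{123} together with the observation that a unitary $B$ carries the scaled system $\{w_{s,j}A^j\fbf_s\}$ onto $\{w_{s,j}C^j\gbf_s\}$ while preserving the tight-frame identity (same constant, same weights). Your explicit inner-product computation is exactly the fleshed-out version of that argument, so there is nothing to add.
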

 
 \begin{corollary}\label{generalSchurstatement}
 Let $A, R$ be two operators on a separable Hilbert space $\cH$,  and   let $U$ be a unitary operator on $\cH$. Let   $ \fbf_{s} \in \cH$,    and set $\vbf_s = U^* \fbf_{s}$ for all $s \in I$, where $I$ is a countable index set.  
If $A=URU^*$, then  TFAE:
  
  \begin{itemize}
    \item[(i)]  $\displaystyle  \cup_{s \in I}   \{ A^j \fbf_{ s} \}_{j=0}^{L_s}$ is a scalable frame for $\cH$, 
    \item[(ii)] $\displaystyle  \cup_{s \in I}   \{R^j \vbf_s \}_{j=0}^{L_s}$   { is a scalable frame for $\cH$.}  
  \end{itemize}   
     
     \end{corollary}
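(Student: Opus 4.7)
The plan is to observe that this corollary is essentially an immediate specialization of Corollary \ref{scalabilityinparalel}, obtained by choosing the unitary operator in that corollary appropriately. So the main effort is just to identify the correct substitution and verify that the resulting hypotheses match.

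First I would set $B := U^*$ in the statement of Corollary \ref{scalabilityinparalel}. Since $U$ is unitary, so is $U^*$, so $B$ qualifies as a unitary operator on $\cH$. Next, I would compute the operator that Corollary \ref{scalabilityinparalel} calls $C$: namely
\[
C = B A B^* = U^* A U = U^* (URU^*) U = R,
\]
where I have used the assumption $A = URU^*$ together with $U^*U = UU^* = I$. I would also check that the vectors match: with $B = U^*$,
\[
\gbf_s = B\fbf_s = U^*\fbf_s = \vbf_s,
\]
which is precisely the vector appearing in (ii).

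With these identifications, the two systems compared in Corollary \ref{scalabilityinparalel} are exactly
\[
\cup_{s\in I}\{A^j \fbf_s\}_{j=0}^{L_s} \quad\text{and}\quad \cup_{s\in I}\{R^j \vbf_s\}_{j=0}^{L_s},
\]
so the scalability of one is equivalent to the scalability of the other. I do not anticipate any real obstacle here; the only thing to be careful about is the direction of conjugation (i.e., whether to use $B = U$ or $B = U^*$), and a one-line calculation settles this. Thus the corollary follows immediately by invoking Corollary \ref{scalabilityinparalel} with $B = U^*$.
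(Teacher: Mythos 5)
Your proposal is correct and matches the paper's (implicit) argument: the paper states this corollary without proof precisely because it is the specialization of Corollary \ref{scalabilityinparalel} with $B=U^*$, which gives $C=U^*AU=R$ and $\gbf_s=U^*\fbf_s=\vbf_s$ exactly as you computed.
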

  Corollary \ref{generalSchurstatement} is relevant to the {\it Schur}  decomposition: recall, any 
   operator  $A$ on a finite-dimensional Hilbert spaces $\cH$ has a non-unique    Schur   decomposition of type  $A=URU^*$, where $U$ is a unitary $n \times n$ matrix, and $R$ is   of Schur form. 
   When $A=A^*$, i.e., $A$ is normal, then  the Schur decomposition becomes unique, and is reduced to the classical unitary diagonalization. In the next subsection, we exploit the simplicity  of the  unitary diagonalization of  normal operators to give more explicit conditions on the normal operator $A$ in order to ensure scalability of a  frame of type $F^{\bf L} _G(A)$.

\subsection{Normal  operators   }  

Let $A$ be a normal  operator on $\cH$. By the spectral theorem, there exists a unitary operator $U$, and a diagonal  operator $D$ such  that 
$A=UDU^*$; in fact, for each $j \in \Z_+$ $A^j = UD^jU^*$. 

Now, let $\cG = \{ \fbf_s \}_{s \in I}$ and set  $\vbf_s=U^*\fbf_s$, $s \in I$. 
Then for each $j \in \Z_+$,
     \begin{equation}\label{connection}
     A^j \fbf_s = UD^j U^* \fbf_s= UD^j \vbf_s = U(D^j \vbf_s ) 
     \;\; \text{ for all } \fbf_s \in \cG. 
     \end{equation}  
 Corollary \ref{generalSchurstatement} for normal operators reads as follows:
   \begin{corollary}\label{connectSymDiagmulti}
Let $A$ be a normal operator on $\cH$, and  let $A=UDU^*$ be its unitary diagonalization. 
 Let  $\{ \fbf_{s}\}_{ s \in I} \subset \cH$, and set  $\vbf_{s} = U^* \fbf_{s}$, $s \in I$. 
 TFAE

\begin{itemize}
\item[(i)] The set
 $\displaystyle \cup_{ s \in I}\{ A^j \fbf_{s}  | \; j=0,1,\ldots, L_s\}$  is a scalable frame for $\cH$.
  
 \item[(ii)] The set
  $ \displaystyle \cup_{ s \in I} \{ D^j \vbf_{s}  | \; j=0,1,\ldots, L_s\}$ is a scalable frame for $\cH$. 
\end{itemize}
   \end{corollary}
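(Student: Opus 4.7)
The statement is essentially a specialization of Corollary \ref{generalSchurstatement} to the setting where the Schur decomposition collapses to a unitary diagonalization, so my plan is to reduce it to that corollary with almost no extra work.

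The plan is as follows. First I would invoke the spectral theorem for normal operators on a finite-dimensional (or, more generally, suitable separable) Hilbert space to obtain a unitary $U$ and a diagonal $D$ with $A = UDU^*$; this is already given in the preamble of the subsection as \eqref{connection}. Next I would simply set $R = D$ in Corollary \ref{generalSchurstatement}. Since $D$ is itself an operator on $\cH$ and $U$ is unitary, the hypotheses of that corollary are met verbatim, and its conclusion reads exactly as the equivalence of (i) and (ii) here, once one identifies $\vbf_s = U^* \fbf_s$.

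More concretely, the chain of reasoning I would write out is: by \eqref{connection}, $A^j \fbf_s = U(D^j \vbf_s)$ for every $j \geq 0$ and every $s \in I$, so the dynamical system $\cup_{s\in I}\{A^j \fbf_s\}_{j=0}^{L_s}$ is the image under the unitary $U$ of the dynamical system $\cup_{s\in I}\{D^j \vbf_s\}_{j=0}^{L_s}$. Therefore Corollary \ref{scalabilityinparalel} (the preservation of scalability under unitary conjugation) applies directly with the roles of $A$ and $C$ played by $D$ and $A$, and one of the sets is a scalable frame if and only if the other is. This yields (i) $\iff$ (ii).

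There is no real obstacle here; the only minor care point is making sure the conjugation is oriented correctly (i.e., that $\vbf_s = U^* \fbf_s$ corresponds to the choice $B = U^*$ in Corollary \ref{scalabilityinparalel}, so that $B A B^* = U^* A U = D$). Once that bookkeeping is confirmed, the corollary is an immediate restatement of the earlier results in the language of the spectral decomposition.
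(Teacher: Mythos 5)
Your proposal is correct and follows the same route as the paper: the paper also obtains this corollary directly from Corollary \ref{generalSchurstatement} (equivalently, from Corollary \ref{scalabilityinparalel} with $B=U^*$, so that $U^*AU=D$), using the identity $A^j\fbf_s = U(D^j\vbf_s)$ recorded in \eqref{connection}. Your bookkeeping on the orientation of the conjugation matches the paper's use of the unitary diagonalization, so nothing is missing.
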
  
    
%\begin{proof}
%Since  the inner product is preserved by a unitary matrix, we have 
 %$\langle  A^j \ebf_1,  A^l \ebf_1 \rangle =  \langle U D^j U^* \ebf_1,  UD^l U^* \ebf_1 \rangle =  \langle D^j  \vbf_1,   D^l   \vbf_1 \rangle$, which completes the proof.
% \end{proof}

We now  restrict our attention to  a finite dimensional  Hilbert space $\cH =\R^n$ or $\C^n$.  Let us first point out  that the frame scalability property is preserved under  simple manipulations:
  \begin{proposition} Let $F = \{\fbf_i\}_{i=1}^k $ be a  scalable frame for $\cH$, $\dim H = n$. 
  Then the following are also scalable frames: 
 \begin{itemize} 
 \item[(i)]  any column or row permutation of $F $
  
 \item[(ii)]  \(\{ U \fbf_i\}_{i=1}^k\) for any unitary matrix $U$
  \end{itemize}
  \end{proposition}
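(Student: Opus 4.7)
The plan is to verify both statements by directly exhibiting the required scaling coefficients for the new frame in terms of the scaling coefficients $w_i$ that make $F$ scalable. By assumption there exist $w_i \geq 0$ such that $F D_{w^2} F^* = cI$ for some constant $c > 0$ (where I absorb the tightness constant into the weights or keep it as in \eqref{defscalablerepr}).

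For part (i), column permutation amounts to reordering the frame vectors: if $\sigma$ is a permutation of $\{1,\dots,k\}$, then the permuted frame is $F' = \{\fbf_{\sigma(i)}\}_{i=1}^{k}$. I would set the new scaling coefficients to be $w'_i = w_{\sigma(i)}$ and observe that
\[
\sum_{i=1}^k (w'_i)^2 \fbf_{\sigma(i)} \fbf_{\sigma(i)}^* = \sum_{j=1}^k w_j^2 \fbf_j \fbf_j^* = cI,
\]
since the sum is indexed and the order of summation does not affect the value. Equivalently, at the matrix level, $F' = FP$ for a permutation matrix $P$, and $F' D_{(w')^2} (F')^* = F P D_{(w')^2} P^* F^* = F D_{w^2} F^* = cI$, because conjugating $D_{w^2}$ by the appropriate permutation matrix recovers $D_{w^2}$ with the diagonal entries rearranged to match.

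For row permutation, I would observe that it is given by left multiplication by a permutation matrix $P$, which is unitary on $\cH$. Therefore row permutation is a special case of (ii), and it suffices to handle the unitary case.

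For part (ii), given any unitary $U$, the proposed scaling is to use the same coefficients $w_i$. I would compute
\[
\sum_{i=1}^k w_i^2 (U \fbf_i)(U\fbf_i)^* = U \Bigl(\sum_{i=1}^k w_i^2 \fbf_i \fbf_i^*\Bigr) U^* = U (cI) U^* = cI,
\]
so $\{U \fbf_i\}_{i=1}^k$ is scalable with the same coefficients. Equivalently, in matrix form, $(UF) D_{w^2} (UF)^* = U F D_{w^2} F^* U^* = cUU^* = cI$.

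There is no real obstacle here; the proposition is essentially a bookkeeping statement about how the identity $F D_{w^2} F^* = cI$ transforms under permutation of columns and under left multiplication by a unitary. The only mild subtlety is noting that row permutation is a unitary acting on the ambient space, which keeps the argument uniform. One should also remark that nonnegativity of the scaling weights is preserved since we are merely relabeling or keeping them.
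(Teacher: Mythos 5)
Your argument is correct: the paper states this proposition without proof, treating it as an immediate observation, and your verification (reordering the weights for a column permutation, and conjugating the identity $F D_{w^2} F^* = cI$ by a unitary for row permutations and for part (ii)) is exactly the routine computation that fills that gap. No issues; the remark that row permutation is just a unitary acting on $\cH$, so it reduces to (ii), is the right way to keep the argument uniform.
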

Given a   diagonal operator $D$ in a  Hilbert space $\cH$ with $\dim \cH = n$, we   first focus our attention on solving    the \textit{one-vector problem}: we look for conditions on  $D$,  and an unknown vector $\vbf \in \cH$, which generate a scalable frame  for $\cH$ of type \eqref{oursystem}. %This  problem involves $2n$ unknowns: the coordinates of $\vbf$ and the diagonal entries of $D$, and should be easy to solve. 
  
  Let $L\geq 0$, let $D$ denote a diagonal $n\times n$ matrix, with diagonal entries $a_1,\ldots,a_n$, and let $\vbf = (x(1), \ldots, x(n))^T \in \cH$. 
    Let $ w_j \in \R_+$, $0\leq j \leq L$, be scaling coefficients such that  $ F_W= \{ w_j D^j \vbf   \}_{ j=0} ^{ L} $ is a Parseval frame for $\cH$, i.e., 
   \begin{equation}\label{wantedS}
   F_W F_W^*= I.
   \end{equation}
   Note that \eqref{wantedS} is equivalent to the system of equations
   \begin{eqnarray}\label{wanteddiagscalable}
   |x(i)|^2  \sum_{k=0} ^L w_k ^2   | a_i |^{2k} & =& 1, \quad i=1,\ldots, n; \nonumber \\
   \sum_{k=0} ^L w_k ^2 \left( a_i\bar{a_j }  \right)^k& =& 0,  \quad i \neq j.
   \end{eqnarray}
There exist  real solutions of \eqref{wanteddiagscalable}  when $n \leq 2$.  For instance, when  $\cH = \R^2$, the choice of  $\vbf = (0.5, 0.5)^T$ and  $D= diag(1, -1)$ generates the set  $\{\vbf, D\vbf, D^2 \vbf , D^3\vbf\}$, which is  a Parseval frame for $\R^2$ .
However,  when $\cH = \R^3$,  the equation $\sum_{k=0} ^L w_k ^2 \left( a_ia_j  \right)^k  = 0, \; i \neq j$ implies that for the first three $a_i's$, we always have the relation  $a_1a_2$, $ a_1a_3$, and $a_2a_3$ are all negative numbers assuming $w_i \neq 0, \, i =1, 2, 3$, which is not possible. Thus we have:
    
 \begin{thm}
 Let $\vbf \in \R^n$, and $a_1, \dots, a_n \in \R$. 
 If $n \ge 2$, then any normal operator for $\R^n$ can not generate a strictly scalable frame from $\vbf$. 
 \end{thm}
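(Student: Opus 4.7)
The plan is to first apply the unitary diagonalization provided by Corollary~\ref{connectSymDiagmulti} to reduce the statement to the diagonal setting analysed immediately before the theorem, and then to extract a sign obstruction from the off-diagonal Parseval equations appearing in \eqref{wanteddiagscalable}.

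Concretely, I would write the assumed normal operator as $A = U D U^*$ with $D = \mathrm{diag}(a_1,\ldots,a_n)$ and set $\ubf = U^* \vbf$ with coordinates $x(1),\ldots,x(n)$. By Corollary~\ref{connectSymDiagmulti}, strict scalability of $\{A^j\vbf\}_{j=0}^{L}$ transfers to strict scalability of $\{D^j\ubf\}_{j=0}^{L}$, so it suffices to rule out a positive solution of the system \eqref{wanteddiagscalable}. I would then zoom in on the off-diagonal equations $\sum_{k=0}^{L} w_k^2 (a_i a_j)^k = 0$ for $i\neq j$ and observe that, since each $w_k > 0$, the constant term satisfies $w_0^2 > 0$. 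If $a_i a_j \geq 0$, every summand is nonnegative and the sum is bounded below by $w_0^2 > 0$, contradicting the vanishing. Hence every pairwise product $a_i a_j$ with $i\neq j$ must be \emph{strictly negative}; in particular each $a_i$ is nonzero.

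To close the argument I would run the short combinatorial sign argument sketched in the paragraph preceding the statement. For $n \geq 3$, pick any three distinct indices $p,q,r$. The conditions $a_p a_q < 0$ and $a_p a_r < 0$ force $a_q$ and $a_r$ to carry the sign opposite to $a_p$, so $a_q a_r > 0$, contradicting the requirement $a_q a_r < 0$. Therefore no positive solution to \eqref{wanteddiagscalable} exists, and the frame cannot be strictly scalable.

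The proof is essentially combinatorial, so no step is genuinely hard; the only mild subtlety is making sure that no degenerate configuration slips past the sign inequality, i.e.\ that $a_i \neq 0$ for all $i$ and that the diagonal equations $|x(i)|^2 \sum_k w_k^2 |a_i|^{2k} = 1$ do not already fail at the preliminary stage. Both concerns are absorbed into the same observation that $w_0^2$ is a strictly positive constant term, which both keeps the off-diagonal sums bounded away from zero in the nonnegative-product case and is compatible with every diagonal equation as long as $x(i)\neq 0$.
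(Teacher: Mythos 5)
Your argument is essentially the paper's own. The authors do not give a separate proof environment for this theorem: the proof is the discussion immediately preceding it, namely the reduction via Corollary \ref{connectSymDiagmulti} to the diagonal system \eqref{wanteddiagscalable}, followed by exactly your sign obstruction --- strict positivity of $w_0^2$ forces $a_i a_j<0$ for every pair $i\neq j$, which is impossible once three (necessarily nonzero) real eigenvalues are involved. Your attention to the minor points (the diagonal equations force $x(i)\neq 0$ so the factor $x(i)\overline{x(j)}$ may be cancelled, each $a_i\neq 0$, and strictness of the scaling transfers under the unitary $U$ with the same weights) is correct.

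The one issue to flag is the range of $n$: your proof, like the paper's, only establishes the claim for $n\ge 3$ (you explicitly pick three distinct indices), whereas the statement reads $n\ge 2$. This is a defect of the statement rather than of your argument: for $n=2$ the single off-diagonal condition $a_1a_2<0$ is satisfiable, and the paper's own example just above the theorem, $D=\mathrm{diag}(1,-1)$ with $\vbf=(0.5,0.5)^T$, yields a Parseval (hence strictly scalable) one-vector dynamical frame for $\R^2$ generated by a symmetric, hence normal, operator. So the hypothesis should be read as $n\ge 3$; with that correction your proof is complete and coincides with the paper's intended argument.
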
 
     
In contrast to the real case, there exists a solution to the one-vector problem in $\C^n$, involving the  $k$-th root of unity:% in $\C^n$. 
        \begin{ex}
  Let $\gamma = e^{2\pi i/ k}$, $k \ge n$. 
  Then the following dynamical operator $A$ and the vector $\vbf$
   \begin{equation*}
   A =   \left( \begin{array}{ccc}
   1& 0& 0  \\
   0 &  \ddots & 0 \\
    0 & 0 & \gamma^{n-1}
     \end{array}\right), \quad 
     \vbf =  \frac{1}{\sqrt{k}}  \left( \begin{array}{c}
   1  \\
   \vdots  \\
    1
     \end{array}\right) 
   \end{equation*}
   generate the Harmonic tight frame $F_{\vbf}^{k-1}$. 
  \end{ex}
   Next, we consider  the multi-generator  case:
  By  \eqref{defscalablerepr}, the scaling coefficients $ w_{s,j}$ related to vectors $D^j \vbf_s$,  $0\leq j \leq L(s)$,  where $\vbf_s =(x_s(1), \ldots, x_s(n))^T$, $1\leq s \leq p$,  need to be solutions to the following system of equations:
   \begin{equation}\label{wantedScalDiagmultivrs}
  \left\{ \begin{array}{ll}
    \sum_{s=1} ^p  |{x_{s}(i)}|^2  \left[w_{s,0}^2+ w_{s,1}^2  |{a_i}|^2 +  \ldots + w_{s,L_s}^2 |{a_i}|^{2L_s} \right]   = 1,  \\
      \sum_{s=1} ^p      x_{s}  (i) \bar{x_{s}}  (j)  \left[  w_{s,0}^2 + w_{s,1}^2 a_i  \bar{a_j} +  \ldots + w_{s,L_s}^2(a_i  \bar{a_j})^{L_k} \right] = 0,  \end{array}\right. \\
     \end{equation} 
     for all $i,j=1,\ldots, n$, $i \neq j$. 
     \begin{proposition}\label{multiscalablediagonalgen} 
   Let $D$ be a diagonal $n\times n$  matrix with 
   diagonal entries $a_1,\ldots, a_n \in \C$, and let $\vbf_s = (x_{s}(1), \ldots, x_{s}(n))^T \in \C^n$, $s \in \{1,\cdots, p\}$, $p \geq 1$.
TFAE:

\begin{itemize}

\item[(i)]    The set $\cup_{s=1} ^p \{ D^j \vbf_s \; | \; j =0,1,\ldots, L_s\}$ is a scalable  frame for $\cH$ 

\item[(ii)] There exist    scaling coefficients   $w_{s,0}, w_{s,1},\ldots, w_{s,L_s}$, $1\leq s \leq p$,  which satisfy conditions \eqref{wantedScalDiagmultivrs}. \end{itemize}
       \end{proposition}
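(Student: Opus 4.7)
The plan is to prove this by unwinding definitions: scalability is characterized by the existence of weights making the (weighted) frame operator a scalar multiple of the identity, and the stated system \eqref{wantedScalDiagmultivrs} is exactly what this matrix equation says entry by entry for the specific vectors $D^j\vbf_s$.

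First I would recall that, by \eqref{defscalablerepr}, the set $\cup_{s=1}^p\{D^j\vbf_s : 0\leq j\leq L_s\}$ is scalable if and only if there exist nonnegative numbers $w_{s,j}$ such that
\[
\sum_{s=1}^{p}\sum_{j=0}^{L_s} w_{s,j}^{2}\,(D^{j}\vbf_{s})(D^{j}\vbf_{s})^{*} \;=\; c\,I
\]
for some $c>0$; by rescaling each $w_{s,j}$ by $1/\sqrt{c}$, we may assume $c=1$, so the scaled system is Parseval. This is the reduction that makes the problem purely computational.

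Next I would compute the left-hand side entrywise. Since $D=\mathrm{diag}(a_1,\ldots,a_n)$, the vector $D^{j}\vbf_{s}$ has $i$-th coordinate $a_i^{j}x_{s}(i)$, so the $(i,k)$ entry of the rank-one matrix $(D^{j}\vbf_{s})(D^{j}\vbf_{s})^{*}$ equals $a_i^{j}\bar a_k^{j}\,x_s(i)\overline{x_s(k)}$. Summing and setting the result equal to $I$ separates cleanly into two families of scalar equations: the diagonal case $i=k$ yields
\[
\sum_{s=1}^{p}|x_s(i)|^{2}\sum_{j=0}^{L_s} w_{s,j}^{2}|a_i|^{2j}=1,\qquad i=1,\ldots,n,
\]
and the off-diagonal case $i\neq k$ yields
\[
\sum_{s=1}^{p}x_s(i)\overline{x_s(k)}\sum_{j=0}^{L_s} w_{s,j}^{2}(a_i\bar a_k)^{j}=0,\qquad i\neq k.
\]
These are precisely the two lines of \eqref{wantedScalDiagmultivrs}, which proves the equivalence (i)$\Leftrightarrow$(ii) in one direction, and the converse follows by reading the same computation backwards: given weights satisfying the system, the frame operator of $\{w_{s,j}D^{j}\vbf_{s}\}$ equals $I$, hence the original collection is scalable.

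There is no real obstacle here; the only small point worth flagging is the normalization step from ``tight'' to ``Parseval'', and the implicit assumption that the frame vectors are nonzero (which, for $\vbf_s\neq 0$, holds since multiplication by powers of $D$ can only zero out entries, not the whole vector unless $\vbf_s$ lies in the kernel of some $D^j$; the scalability condition is insensitive to discarding such zero terms from the sum). Everything else is bookkeeping of the matrix identity $\sum w_{s,j}^{2}(D^{j}\vbf_{s})(D^{j}\vbf_{s})^{*}=I$.
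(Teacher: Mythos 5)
Your proof is correct and follows essentially the same route as the paper: the paper obtains \eqref{wantedScalDiagmultivrs} exactly by writing the Parseval condition $F_WF_W^*=I$ for the scaled system entrywise, using that the $i$-th coordinate of $D^j\vbf_s$ is $a_i^j x_s(i)$, just as you do. Your remarks on normalizing tight to Parseval and on nonzero vectors are harmless refinements of the same computation.
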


 By Corollary \ref{connectSymDiagmulti} and  Proposition \ref{multiscalablediagonalgen},   the following result holds true for a finite dimensional Hilbert space $\cH$:

\begin{thm}\label{symmetriccaseequivalence}
Let $A = UDU^*$ be a normal $n\times n$ matrix,   where  $U$ is   unitary, and $D$ is  diagonal,  with  diagonal entries $a_1, \ldots, a_n \in \C$.
Let $\fbf_s \in \cH$, and set $\vbf_{s} = U^* \fbf_s = (x_s(1), \ldots, x_s(n))^T$,  $1\leq s \leq p$.

   The set $\cup_{s=1}^p \{ A^j \fbf_{s} \; | \; 0\leq j \leq L_s \}$ is a scalable frame of $\cH$ if an only if there there exists a positive solution   $w_{s,0}, w_{s,1},\ldots, w_{s,L_s}$, $1\leq s \leq p$ to the system of equations \eqref{wantedScalDiagmultivrs}, defined with respect to $a_1, \ldots, a_n$ and $x_s(1), \ldots, x_s(n)$,  $1\leq s \leq p$.
   \end{thm}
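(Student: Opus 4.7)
The proof is essentially a two-step reduction composing the preceding Corollary \ref{connectSymDiagmulti} with Proposition \ref{multiscalablediagonalgen}, so my plan is just to chain these together cleanly.

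First, I would invoke Corollary \ref{connectSymDiagmulti} with the given unitary diagonalization $A = UDU^*$ and $\vbf_s = U^*\fbf_s$. Since $U$ is unitary (in particular invertible, and even norm-preserving), scalability of the iterated system under $A$ starting from $\{\fbf_s\}$ is equivalent to scalability of the iterated system under $D$ starting from $\{\vbf_s\}$; in fact, by the identity $A^j \fbf_s = U(D^j \vbf_s)$ noted in \eqref{connection}, the very same scaling coefficients work on both sides, since a unitary map preserves the frame operator up to conjugation and sends tight frames to tight frames with the same bound. This step converts the original scalability question into the diagonal scalability question.

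Second, I would apply Proposition \ref{multiscalablediagonalgen} to the diagonal system $\cup_{s=1}^p\{D^j \vbf_s : 0 \le j \le L_s\}$ to rewrite the scalability condition as the algebraic system \eqref{wantedScalDiagmultivrs} in the unknowns $w_{s,j}$. Concretely, unfolding $F_W F_W^* = I$ entry by entry produces exactly the diagonal equations (giving the norm conditions on $|x_s(i)|$) and the off-diagonal equations (giving the orthogonality conditions involving the products $a_i \bar a_j$). The content of Proposition \ref{multiscalablediagonalgen} is precisely this correspondence.

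Concatenating these two equivalences yields the theorem: $\cup_{s=1}^p\{A^j\fbf_s : 0 \le j \le L_s\}$ is scalable if and only if $\cup_{s=1}^p\{D^j\vbf_s : 0 \le j \le L_s\}$ is scalable if and only if \eqref{wantedScalDiagmultivrs} admits a (strictly positive, for the strictly scalable case) solution in $w_{s,j}$ determined by $a_1,\dots,a_n$ and the coordinates $x_s(i)$ of $\vbf_s = U^*\fbf_s$.

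The only subtlety I see is bookkeeping about what kind of solution $(w_{s,j} \geq 0$ vs.\ $w_{s,j} > 0)$ corresponds to scalable vs.\ strictly scalable; this is not a real obstacle but just requires matching the terminology set in the preliminaries. No genuinely new computation is needed beyond citing the two prior results, so I expect the write-up to be a short two-sentence proof.
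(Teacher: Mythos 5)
Your proposal is correct and follows essentially the same route as the paper, which obtains Theorem \ref{symmetriccaseequivalence} precisely by chaining Corollary \ref{connectSymDiagmulti} (unitary equivalence reduces the $A$-system to the $D$-system) with Proposition \ref{multiscalablediagonalgen} (scalability of the diagonal system is equivalent to solvability of \eqref{wantedScalDiagmultivrs}). The only loose end you flag, the nonnegative-versus-strictly-positive weight bookkeeping, is a terminological wrinkle already present in the paper's own statements rather than a gap in your argument.
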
 
 %  \begin{proof}
 %  We make use of vectors    $\vbf_{l_k} = U^* \ebf_{l_k}$, $1\leq k \leq s$, and analyze the scalabilty of the iterated system employing the diagonal operator $D$ by implementing the results fromProposition \ref{connectSymDiagmulti} and  Theorem \ref{multiscalablediagonalgen}.       \end{proof}
 
 \begin{com}
 The problem of finding   specific conditions under which the set in item (ii) in Corollary \ref{generalSchurstatement}     is a scalable frame for $\cH$  is still open for operators which do not possess a unitary diagonalization. %problem..
  For this reason, we further study  several operators with special structures, such as block-diagonal operators (section \ref{blockdiagOpsubsection}) and companion operators (subsection \ref{compansection}). 
 \end{com}   
 %%%%%%%%%%%%%%%%%%%%%%%
%%%%%%%%%%%%%%%%%%%%%%%
%%%%%%%%%%%%%%%%%%%%%%%    
    
\section{Block-diagonal operators } \label{blockdiagOpsubsection}
In this section, %we propose a special  structure  of the dynamical operator at hand.
%In particular, 
we explore the case when the   operator $A$ is of  block-diagonal form.  Block-diagonal operators give  us a chance to offer a partial answer to (Q1) in the case when we don't have a unitary diagonalization. 

 Note that in subsection \ref{blocks} we give examples of 
   operators which generate scalable frames in Hilbert spaces of dimension $2$ and $3$. Since we can treat $\cH$  with $\dim \cH = n$ as a decomposition of several subspaces of dimensions  2 and 3,   the examples in subsection \ref{blocks} provide   infinite examples of block-diagonal operators which generate scalable frames for $\cH$.   
  \begin{thm}\label{stackScale}
 Let $F_s$ be a scalable  frame for $\cH_{s}$, with $\dim \cH_s = n_s$, $s =1, \ldots p$, and let 
\begin{equation}\label{scal333}
 G =  \left( \begin{array}{ccc}
         F_1 & 0 & 0   \\
              0 & \ddots & 0  \\
              0 &   0 &  F_p
         \end{array}\right). \end{equation}
Then  $G$ is a scalable frame  for 
$\cH= \cH_1 \oplus \ldots \oplus \cH_p$. 
 \end{thm}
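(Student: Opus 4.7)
The plan is to leverage the scalability of each block $F_s$ and then use the block-diagonal structure of $G$ to assemble the scalings into a single diagonal matrix $D_{W^2}$. First, since each $F_s=\{\fbf_{s,i}\}_{i=1}^{k_s}$ is a scalable frame for $\cH_s$, there exist nonnegative coefficients $w_{s,1},\ldots,w_{s,k_s}$ and a constant $c_s>0$ such that $F_s D_{w_s^2} F_s^* = c_s I_{n_s}$. Replacing $w_{s,i}$ by $w_{s,i}/\sqrt{c_s}$, we may assume that each scaled frame is Parseval, i.e.
\[
F_s D_{w_s^2} F_s^* = I_{n_s}, \qquad s=1,\ldots,p.
\]

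Next, I would concatenate these coefficients into a single diagonal matrix $D_{W^2}$ whose diagonal is the concatenation of the vectors $w_s^2 = (w_{s,1}^2,\ldots,w_{s,k_s}^2)$, ordered to match the column ordering of $G$. Because $G$ is block-diagonal with blocks $F_s$, the product $G D_{W^2} G^*$ is itself block-diagonal, with $s$-th diagonal block equal to $F_s D_{w_s^2} F_s^*$. By the previous step, each such block equals $I_{n_s}$, so
\[
G D_{W^2} G^* = I_{n_1+\cdots+n_p} = I_{\dim \cH}.
\]
This shows that the scaled system $\{w_{s,i}\, \gbf_{s,i}\}$, where $\gbf_{s,i}$ denotes the $i$-th column of the $s$-th block of $G$, is a Parseval frame for $\cH$, and hence $G$ is scalable.

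Finally, I would remark that $G$ is indeed a frame for $\cH$: since each $F_s$ spans $\cH_s$, the columns of $G$ (viewed as elements of $\cH_1\oplus\cdots\oplus\cH_p$ via the natural embedding of each $\cH_s$) span all of $\cH$, and the frame bounds are inherited from those of the finitely many $F_s$. I do not anticipate any serious obstacle: the argument is essentially a block-matrix computation, and the only subtlety is the preliminary normalization of each $F_s$ to a Parseval frame so that the diagonal blocks of $GD_{W^2}G^*$ all agree and add up to the identity on the direct sum.
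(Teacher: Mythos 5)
Your proof is correct, and it is essentially the argument the paper intends: the paper states Theorem \ref{stackScale} without a written proof, but its proof of the closely related Theorem \ref{blockresultbig} rests on exactly this observation that a block-diagonal scaled matrix has $F_s D_{w_s^2} F_s^*$ as its diagonal blocks, so after normalizing each block to be Parseval the product $G D_{W^2} G^*$ is the identity. Your added remarks (rescaling each tight block to Parseval, and checking that the embedded columns span $\cH$ so that $G$ is a frame) are the right small details and do not change the approach.
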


%\textcolor{blue}{Here I explain what a well-embeded vector is, but I am still not happy with the definition.  }
\begin{definition}\label{wellembededvr}
Let  $A_s : \cH_s \rightarrow \cH_s$ be an operator on $\cH_s$, with $\dim \cH_s =  n_s$, $1\leq s \leq p$. %Fix  $n_1+\ldots +n_p = N$, and 
Let $A : \cH_s \rightarrow \cH_s $ be a block-diagonal operator on $\displaystyle \cH=     \oplus_{s=1}^p  \cH_s$, constructed as follows: 
\begin{equation}\label{blockdiagdynamoperator}
A = \left( \begin{array}{ccc}
  A_1 &  \ldots & 0\\
\vdots & \vdots  & \vdots \\
 0 &   \ldots  & A_p
  \end{array} \right).
\end{equation} 
%$1\leq s\leq p$, 
Let  $\vbf   \in \cH_s$ for some $1\leq s \leq p$. We  say that    $\vbf$  is   {\it well-embeded } in     $\fbf \in \cH$   with respect to operator  \eqref{blockdiagdynamoperator} if 
\begin{equation}
\begin{cases} \fbf(j) = \vbf(i), &\mbox{if } j = n_1 +\ldots n_s +i\\ 
\fbf(j) = 0, & \mbox{otherwise.}   \end{cases} 
\end{equation}
 \end{definition}
Whenever $\vbf$ is well-embedded in $\fbf$ with respect to \eqref{blockdiagdynamoperator}, we have %the interaction
 $$A\fbf=\left( \begin{array}{c}
   0 \\
 A_s \vbf \\
 0 
  \end{array} \right). $$
 
\begin{thm}\label{blockresultbig} Let  $A_s : \cH_s \rightarrow \cH_s$ be an operator on $\cH_s$, with $\dim \cH_s =  n_s$, $1\leq s \leq p$. %Fix  $n_1+\ldots +n_p = N$, and 
Let $A : \cH_s \rightarrow \cH_s $ be a block-diagonal operator on $\displaystyle \cH=     \oplus_{s=1}^p  \cH_s$, constructed as in \eqref{blockdiagdynamoperator}. 
Let $\fbf_{s,1}, \ldots, \fbf_{s,m_s} \in \cH$, $1\leq s \leq p$  be   well-embedded vectors $\vbf_{s, 1}   \ldots, \vbf_{s,m_s} \in \cH_s$,  $1\leq s\leq p$.
\begin{equation}\label{bigguy}
\text{The set} \;\;\;\; \;  \bigcup_{s=1}^p \{ A^j \fbf_{s, k} \;\; | \;\;  1\leq k \leq m_s \}_{j=0}^{  L_{s,k} }   \;\;\;\; \; \;\;\;\; \; \;\;\;\; \;
\end{equation}
 { is a (scalable) frame of $\cH$}
   if and only if  $\;  \{ A_s^j \vbf_{s,k} \;  | \;   1\leq k\leq m_s   \}_{j=0}^{  L_{s,k} } $ are (scalable) frames of $\ \cH_s $  for all $1\leq s \leq p$.
 \end{thm}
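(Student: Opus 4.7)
The plan is to exploit the fact that the well-embedding identifies each $\cH_s$ with a canonical orthogonal subspace of $\cH$, and that the block-diagonal form of $A$ preserves these subspaces. The first step is to iterate the computation shown just before the theorem to see that if $\vbf_{s,k}$ is well-embedded in $\fbf_{s,k}$, then $A^j\fbf_{s,k}$ is the well-embedding of $A_s^j\vbf_{s,k}$ into $\cH$; in particular every $A^j\fbf_{s,k}$ lives in the canonical copy of $\cH_s$ inside $\cH$. As a consequence, for any $\fbf \in \cH$ with orthogonal decomposition $\fbf = \fbf^{(1)} \oplus \cdots \oplus \fbf^{(p)}$, where $\fbf^{(s)}$ corresponds to $\vbf^{(s)} \in \cH_s$, one has
\begin{equation}\label{mainIdent}
\langle \fbf, A^j \fbf_{s,k}\rangle_{\cH} = \langle \vbf^{(s)}, A_s^j \vbf_{s,k}\rangle_{\cH_s}.
\end{equation}

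For the frame half, I would combine \eqref{mainIdent} with the Pythagorean identity $\|\fbf\|^2 = \sum_s \|\vbf^{(s)}\|^2$. If each family $\{A_s^j \vbf_{s,k}\}_{k,j}$ is a frame for $\cH_s$ with bounds $\alpha_s \le \beta_s$, then summing the corresponding inequalities over $s$ immediately yields frame bounds $\min_s \alpha_s$ and $\max_s \beta_s$ for \eqref{bigguy}. Conversely, testing the frame inequality for \eqref{bigguy} on vectors $\fbf$ supported in a single $\cH_s$ makes all cross-block terms vanish by \eqref{mainIdent} and leaves exactly the frame inequality for $\{A_s^j\vbf_{s,k}\}_{k,j}$ on $\cH_s$.

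For scalability, suppose each component family scales to a tight frame via coefficients $w_{s,k,j} \ge 0$ with tight constants $c_s$. After multiplying the coefficients in block $s$ by $\sqrt{c/c_s}$ for a common target constant $c$, the scaled synthesis matrix of $\{w_{s,k,j} A^j\fbf_{s,k}\}$ takes the block-diagonal form \eqref{scal333} treated in Theorem \ref{stackScale}, so it is a tight frame for $\cH$. The converse is symmetric: if coefficients $w_{s,k,j}$ make \eqref{bigguy} tight, applying the tight-frame identity to $\fbf$ supported in $\cH_s$ extracts, via \eqref{mainIdent}, a tight scaling of $\{A_s^j\vbf_{s,k}\}_{k,j}$ on $\cH_s$ with the restricted coefficients.

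The entire argument reduces to the block-orthogonality identity \eqref{mainIdent} established in the first step; neither direction requires spectral information about $A_s$. The only minor bookkeeping point, and the closest thing to an obstacle, is matching the tight constants $c_s$ across blocks in the forward direction of the scalability claim, but this is a free rescaling within each block and does not affect scalability.
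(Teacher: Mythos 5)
Your argument is correct, but it takes a somewhat different route from the paper's. The paper (assuming $m_s=1$ to ease notation) works with the scaled synthesis matrix: since the well-embedding forces this matrix into block-diagonal form, tightness of the whole system is read off from orthogonality and equal norms of its rows, which restrict to each block and give scalability of the block systems; the converse is handled by producing Parseval scalings blockwise and appealing to the stacking principle of Theorem \ref{stackScale} with the block form \eqref{scal333}. You instead establish the pointwise identity $\langle \fbf, A^j\fbf_{s,k}\rangle_{\cH}=\langle \vbf^{(s)},A_s^j\vbf_{s,k}\rangle_{\cH_s}$ (after iterating the observation that $A^j\fbf_{s,k}$ is the well-embedding of $A_s^j\vbf_{s,k}$, which is the same first step as the paper's) and then argue directly on the frame inequality: summing over blocks with the Pythagorean decomposition gives bounds $\min_s\alpha_s$ and $\max_s\beta_s$, and testing on vectors supported in a single block gives the converse, with the tight/scalable case obtained by normalizing each block's tight constant before stacking and by restricting a global tight scaling to one block. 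What each approach buys: the paper's matrix argument is compact and natural in finite dimensions, but its written proof only really addresses the scalable statement and leaves the plain frame equivalence and the multi-generator bookkeeping implicit; your analysis-operator argument covers both the frame and the scalable claims in both directions, handles all $m_s$ generators without extra notation, and makes the cross-block orthogonality that drives the result explicit. Your handling of the one genuine bookkeeping issue (matching the blockwise tight constants $c_s$ to a common constant before assembling) is exactly right and is also what underlies Theorem \ref{stackScale}.
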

%\textcolor{blue}{in the theorem above I corrected the indexing, it was not wel done. Also, the theorem holds true for frames and scalable frames! This is why I add 'scalable' in ().} 
\begin{proof}
We assume that all $m_s =1$, i.e., $\fbf_{s,k} = \fbf_s$,  $\vbf_{s,k} = \vbf_s$,   and $ L_{s,k}  = L_s$,  $1\leq s \leq p$, to simplify the presentation of the proof. 
The matrix representation of $ \cup_{s=1}^p \{ A^j \fbf_s \}_{j=0}^{L_s}$ with scaling coefficients 
$w_{s, j}$,  $0\leq j \leq L_s$ for each  $s=1,\ldots,  p$ is of block-diagonal form: 
 \begin{equation*}
F=\left( \begin{array}{ccccccc}
 w_{1,0} \vbf_{1}  & \ldots & w_{1, L_1}A_1^{L_1} \vbf_{1}&  &&& \\      
  &&&\ddots &&& \\
  &&& & w_{p, 0}\vbf_{p}  & \ldots & w_{p, L_p} A_p^{L_p}\vbf_{p}   
   \end{array}\right)  . \end{equation*}  
If $F$ is a tight frame, then row vectors of $F$ are orthogonal and have the same norm and so does 
$ (w_{s, 0} \vbf_{s}   \ldots  w_{s, L_s} A_s^{L_s} \vbf_{s})$ for each 
$s =1, \ldots, p$. This implies that the system $  \{ A_s^j  \vbf_{s}\}_{j=0}^{L_k}$ is a scalable frame for  $\cH_s$  for all $1\leq s \leq p$.

Now, suppose that  for each $1\leq s \leq p$, the system $  \{ A^j \vbf_{s}\}_{j=0}^{L_s}$ is a scalable frame for $\cH_s$. Then, there exist some scaling coefficients $w_{s,j}$, $1\leq s \leq p$, $0\leq j\leq L_s$, such that   $ \{ w_{s,j}A_s^{j} \vbf_{s}   | 0\leq j \leq L_s\}$  is a Parseval frame for each $s=1, \ldots p$. 
\end{proof}

 \subsection{Scalable dynamical frames for $\R^2$ and $\R^3$}\label{blocks}
  For the classification of a tight frame in this section,  we use the notion of the {\it diagram vector}. 
For any  \(\fbf  \in\mathbb{R}^n\), we define the diagram vector associated with \(\fbf\), denoted \(\tilde{\fbf}\), by
\begin{equation*}
\tilde{\fbf} = 
\frac{1}{\sqrt{n-1}}
  \left( \begin{array}{c}
\fbf(1)^2-\fbf(2)^2\\  \vdots  \\ \fbf(n-1)^2 -\fbf(n)^2 \\  
\sqrt{2n}\fbf(1)\fbf(2) \\ \vdots  \\  \sqrt{2n}\fbf(n-1)\fbf(n)
\end{array} \right)
\in\mathbb{R}^{n(n-1)\times 1},
\end{equation*}
where the difference of squares 
$\fbf(i)^2- \fbf(j)^2$ and the 
 product \(\fbf(i)\fbf(j)\)  occur exactly once for \(i < j, \ i = 1, 2, \cdots, n-1.\) 
 
 Analogously, for any vector \(\fbf\in\mathbb{C}^n\), we define the diagram vector associated with \(\fbf\), denoted \(\tilde{\fbf}\), by
\begin{equation*}
\tilde{\fbf} = 
\frac{1}{\sqrt{n-1}}
  \left( \begin{array}{c}\fbf(1) \overline{\fbf(1)}-\fbf(2)\overline{\fbf(2)} \\  \vdots  \\ \fbf(n-1)\overline{\fbf(n-1)}-\fbf(n)\overline{\fbf(n)} \\  
\sqrt{n}\fbf(1) \overline{\fbf(2)} \\ \sqrt{n} \overline{\fbf(1)} \fbf(2) \\ \vdots  \\  \sqrt{n}\fbf(n-1)\overline{\fbf(n)} 
\\ \sqrt{n} \overline{\fbf(n-1)} \fbf(n)
 \end{array} \right) \in\mathbb{C}^{3n(n-1)/2},
\end{equation*}
where the difference of the form 
$\fbf(i) \overline{\fbf(i)} - \fbf(j) \overline{\fbf(j)}$ occurs exactly once for \(i < j, \ i = 1, 2, \cdots, n-1\)  and the 
 product of the form \(\fbf(i)  \overline{\fbf(j)} \)  occurs exactly once for \(i  \neq j.\)

The diagram vectors give us the following characterizations of tight frames and scalable frames:
\begin{thm}
\label{charTight}\cite{ CKLMNS13, CKLMNPS14}
Let \(\{\fbf_i\}_{i=1}^k\) be a sequence of vectors in \( \cH \), not all of which are zero. Then \(\{\fbf_i\}_{i=1}^k\) is a tight frame if and only if \(\sum_{i=1}^k\tilde{\fbf_i}=0\). 
\end{thm}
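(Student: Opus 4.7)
The plan is to translate the matrix identity $FF^{*}=AI_{n}$ that defines an $A$-tight frame into coordinate equations on the entries of the $\fbf_{i}$, and then recognize that these equations are exactly what it means for the block structure of $\sum_{i}\tilde{\fbf_{i}}$ to vanish.

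First, I would record the two scalar families encoded by $S = \sum_{i=1}^{k}\fbf_{i}\fbf_{i}^{*} = AI_{n}$. Writing $S_{lm} = \sum_{i}\fbf_{i}(l)\overline{\fbf_{i}(m)}$, this is equivalent to: (a) $\sum_{i}|\fbf_{i}(l)|^{2} = A$ for every $l = 1,\dots,n$, and (b) $\sum_{i}\fbf_{i}(l)\overline{\fbf_{i}(m)} = 0$ for every pair $l\neq m$. Since the common value $A$ in (a) is not specified in advance, (a) is equivalent to the pairwise condition $\sum_{i}\bigl(|\fbf_{i}(l)|^{2} - |\fbf_{i}(m)|^{2}\bigr) = 0$ for every $l<m$; the hypothesis that not all $\fbf_{i}$ vanish then guarantees that the common value is strictly positive, which is exactly what makes $\{\fbf_{i}\}$ a tight frame rather than the zero operator.

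Second, I would read off $\sum_{i}\tilde{\fbf_{i}}$ block by block. In the real case the first $\binom{n}{2}$ coordinates of $\sum_{i}\tilde{\fbf_{i}}$ are (up to the factor $1/\sqrt{n-1}$) the sums $\sum_{i}(\fbf_{i}(l)^{2}-\fbf_{i}(m)^{2})$ for $l<m$, which match the rewritten (a); the remaining $\binom{n}{2}$ coordinates are $\sqrt{2n}\sum_{i}\fbf_{i}(l)\fbf_{i}(m)$, whose vanishing is precisely (b) (noting $\overline{\fbf_{i}(m)} = \fbf_{i}(m)$ over $\R$ and $\sqrt{2n}\neq 0$). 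Hence $\sum_{i}\tilde{\fbf_{i}} = 0$ iff (a) and (b) both hold, iff $\{\fbf_{i}\}$ is a tight frame.

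For the complex case the argument is the same, with one bookkeeping addition: the diagram vector carries both $\sqrt{n}\,\fbf(l)\overline{\fbf(m)}$ and $\sqrt{n}\,\overline{\fbf(l)}\fbf(m)$ for each unordered pair, so the product block of $\sum_{i}\tilde{\fbf_{i}} = 0$ encodes $S_{lm} = 0$ and $S_{ml} = 0$ simultaneously (consistent with Hermiticity of $S$), while the difference block $\fbf(l)\overline{\fbf(l)} - \fbf(m)\overline{\fbf(m)} = |\fbf(l)|^{2} - |\fbf(m)|^{2}$ again gives the diagonal equalities. No step here is genuinely hard; the only thing to be careful about is the combinatorial matching between the block coordinates of $\tilde{\fbf}$ and the matrix entries of $\fbf\fbf^{*}$, and the invocation of the nonzero hypothesis to secure $A>0$.
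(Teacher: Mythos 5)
Your proof is correct: identifying tightness with $\sum_i\fbf_i\fbf_i^{*}=AI_n$, splitting this into the equal-diagonal and vanishing-off-diagonal conditions, and matching those blockwise to the coordinates of $\sum_i\tilde{\fbf_i}$ (with the not-all-zero hypothesis forcing $A>0$) is exactly the standard argument. The paper itself states this theorem with a citation to \cite{CKLMNS13, CKLMNPS14} and gives no proof, and your argument coincides with the one in those references, so there is nothing to flag.
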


\begin{thm}\label{charScale}\cite{CKLMNS13, CKLMNPS14}
Let  \(\{\fbf_i\}_{i=1}^k\) be  a unit-norm frame for $\cH$ and  $c_1, \cdots, c_k$ be nonnegative numbers, which are not all zero. 
Let $\tilde{G}$ be the Gramian associated to the diagram vectors  \(\{ \tilde{\fbf}_i\}_{i=1}^k\) . 
Then  $\{c_i \fbf_i\}_{i=1}^k $ is a tight frame  for $\cH$ if and only if 
% $\fbf = \left( \begin{array}{c} c_1^2\\ \vdots\\ c^2_k \end{array}\right) $
$\fbf = \left(   c_1^2 \ldots  c^2_k  \right)^T$
 belongs to the null space of $\tilde{G}$. 
\end{thm}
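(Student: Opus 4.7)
The plan is to reduce the scalability statement to the tightness characterization in Theorem \ref{charTight} via the key homogeneity property of the diagram vector construction. The first observation I would establish is that for any nonnegative scalar $c$ and any $\fbf \in \cH$, one has $\widetilde{c\fbf} = c^2\, \tilde{\fbf}$. This follows directly from the definition of $\tilde{\fbf}$ entry by entry: each coordinate of $\tilde{\fbf}$ is either a difference of squared moduli or a (scaled) pairwise product of coordinates of $\fbf$, and both types of expressions are homogeneous of degree $2$. In the complex case I would use $c \geq 0$ to replace $|c|^2$ by $c^2$ without ambiguity.

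Given this homogeneity, Theorem \ref{charTight} applied to the rescaled system $\{c_i \fbf_i\}_{i=1}^k$ tells us that $\{c_i \fbf_i\}_{i=1}^k$ is a tight frame if and only if
\begin{equation*}
\sum_{i=1}^{k} \widetilde{c_i \fbf_i} \;=\; \sum_{i=1}^{k} c_i^{2}\, \tilde{\fbf_i} \;=\; 0.
\end{equation*}
Writing $\tilde{F}$ for the matrix whose columns are the diagram vectors $\tilde{\fbf_1}, \ldots, \tilde{\fbf_k}$, the equality above is precisely the assertion that the vector $(c_1^2, \ldots, c_k^2)^T$ lies in the kernel of $\tilde{F}$.

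It remains to move from $\ker \tilde{F}$ to $\ker \tilde{G}$, where $\tilde{G} = \tilde{F}^{*}\tilde{F}$ is the Gramian of the diagram vectors. I would invoke the standard fact that $\ker(\tilde{F}^{*}\tilde{F}) = \ker(\tilde{F})$: one inclusion is trivial, and the reverse follows from $\langle \tilde{F}^{*}\tilde{F}\xbf, \xbf \rangle = \|\tilde{F}\xbf\|^{2}$. Combining the two equivalences yields the stated biconditional, namely that $\{c_i \fbf_i\}_{i=1}^{k}$ is tight if and only if $(c_1^2, \ldots, c_k^2)^T \in \ker \tilde{G}$.

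I do not expect a serious obstacle here; the argument is essentially bookkeeping once the $c^2$-homogeneity of the diagram vector is noted. The only subtle point worth spelling out is the hypothesis that the $c_i$ are nonnegative (so that $c_i^2$ genuinely records $|c_i|^2$ and the unit-norm assumption on $\{\fbf_i\}$ ensures we are in the setting of \cite{CKLMNS13, CKLMNPS14} where scalability is originally defined); this justifies the indexing of the null-space vector by the squares $c_i^2$ rather than $|c_i|^2$.
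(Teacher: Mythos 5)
Your proof is correct. The paper does not actually prove Theorem \ref{charScale} --- it is quoted from \cite{CKLMNS13, CKLMNPS14} --- and your route (the degree-two homogeneity $\widetilde{c\fbf}=c^{2}\tilde{\fbf}$ for $c\ge 0$, Theorem \ref{charTight} applied to the scaled system, and $\ker(\tilde{F}^{*}\tilde{F})=\ker\tilde{F}$ for the Gramian) is exactly the standard argument in those references, so there is nothing to compare against in the paper itself. The only step worth stating explicitly is that the scaled system $\{c_i\fbf_i\}_{i=1}^{k}$ is not identically zero (the $c_i$ are not all zero and the $\fbf_i$ are unit-norm), which is the hypothesis needed to invoke Theorem \ref{charTight}.
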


 Let $\{\ebf_1, \ldots, \ebf_n\}$ be the standard orthonormal basis in $\R^n$ or $\C^n$. 
 
 \begin{proposition}\label{niceR2example}
 Let $A=   \left( \begin{array}{cc}
    a & c \\
   b & d
    \end{array}\right)$ be an operator in $\R^2$, where $a, b, c, d$ are not all zeros. 
  If $a=0$ and $b \neq 0$, then $F_{\ebf_1}^1 $ is a  scalable frame for $\R^2$.  
 \end{proposition}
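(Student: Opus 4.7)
The plan is to exploit the fact that under the hypothesis $a=0$ and $b\neq 0$, the set $F_{\ebf_1}^1 = \{\ebf_1,\, A\ebf_1\}$ reduces to a pair of nonzero orthogonal vectors in $\R^2$. First I would compute $A\ebf_1 = (a,b)^T = (0,b)^T = b\,\ebf_2$, so the candidate frame is simply $\{\ebf_1,\, b\,\ebf_2\}$. Since $b\neq 0$, this is already a basis of $\R^2$ and therefore automatically a frame, and only the scalability claim still has to be justified.

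Next I would produce explicit scaling coefficients $w_0,w_1\ge 0$ as demanded by \eqref{defscalablerepr}. Because the two vectors are already mutually orthogonal, the tightness condition $F_w F_w^{*} = \lambda I$ collapses to the single requirement that the scaled vectors have equal Euclidean norms. Choosing $w_0 = |b|$ and $w_1 = 1$ produces $|b|\,\ebf_1$ and $b\,\ebf_2$, each of norm $|b|$, and a direct computation gives
\[
F_w F_w^{*} = |b|^2 \ebf_1 \ebf_1^{T} + b^2 \ebf_2 \ebf_2^{T} = b^2 I,
\]
so the scaled system is $b^2$-tight for $\R^2$. This is precisely scalability of $F_{\ebf_1}^1$.

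An alternative, more formal route would be to invoke Theorem \ref{charScale}: after normalizing $A\ebf_1$ to unit length one computes the diagram vectors $\widetilde{\ebf_1}$ and $\widetilde{\ebf_2}$, forms their Gramian $\tilde G$, and verifies that a positive vector sits in the null space of $\tilde G$, recovering the same scaling coefficients up to a multiplicative constant. Either path is essentially routine algebra, so I do not anticipate a substantive obstacle; the only point worth highlighting is the role of the hypothesis $b\neq 0$, which is exactly what guarantees that $A\ebf_1$ is a genuinely nonzero vector orthogonal to $\ebf_1$. Without it, $F_{\ebf_1}^1$ would collapse to $\{\ebf_1\}$, which cannot span $\R^2$ and hence cannot be a frame at all.
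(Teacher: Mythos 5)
Your proposal is correct and follows essentially the same route as the paper: the paper likewise observes that $F_{\ebf_1}^1=\{(1,0)^T,(0,b)^T\}$ consists of two orthogonal nonzero vectors and concludes (strict) scalability. Your only addition is to write out the explicit weights $w_0=|b|$, $w_1=1$ giving the $b^2$-tight scaled frame, which the paper leaves implicit.
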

 \begin{proof}
 If  $a=0$ and $b\neq 0$,  then $F_{\ebf_1}^1 = \{  (1,0)^T, (0,b)^T\}$. Since the two  vectors in  $F_{\ebf_1}^1 $ are orthogonal,  $F_{\ebf_1}^1 $ is a strictly  scalable frame for $\R^2$. 
 \end{proof}
 We highlight that, when $b=d \neq 0$  and $c=-d/4$     in   Proposition \ref{niceR2example}, the matrix $A$ is non-diagonalizable yet generates a scalable frame for $\R^2$. 
  
 \begin{proposition}\label{2tight}
 Let $a, b, c, d $ be real numbers such that 
  $a \neq -d$,  \[b=   \frac{\pm 1}{a+d}\sqrt{\frac{a^2(a+d)^2 + (a+d)^2 +a^2}{1+(a+d)^2}}, \text{ and }\] 
  \[ c = \mp a(ad+a^2+1) \sqrt{\frac{1+(a+d)^2}{(a+d)^2 +a^2(a+d)^2 +a^2}}.\] 
 Then the   operator $A=   \left( \begin{array}{cc}
    a & c \\
   b & d
    \end{array}\right)$ in $\R^2$ generates a tight frame 
    $$F_{\ebf_1}^2  =\left( \begin{array}{ccc}
    1 & a & a^2 + bc \\
   0 & b & ab+bd
    \end{array}\right)  . $$   
 \end{proposition}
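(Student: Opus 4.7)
The plan is to verify tightness via Theorem \ref{charTight}, which reduces the problem to checking that the sum of diagram vectors of the three frame elements $\ebf_1=(1,0)^T$, $A\ebf_1=(a,b)^T$, and $A^2\ebf_1=(a^2+bc,\,ab+bd)^T$ vanishes. In $\R^2$ the diagram vector of $\fbf=(f(1),f(2))^T$ is simply $\tilde{\fbf}=(f(1)^2-f(2)^2,\,2f(1)f(2))^T$, so the tightness condition becomes the pair of scalar equations
\begin{equation*}
1+(a^2-b^2)+(a^2+bc)^2-(ab+bd)^2=0,\qquad 2ab+2(a^2+bc)(ab+bd)=0.
\end{equation*}

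First I would analyze the second equation. Factoring out $b$ (and using the hypothesis $a+d\neq 0$, which makes the formulas for $b,c$ well defined, and the implicit $b\neq 0$ coming from those formulas when the radicands are positive) one gets
\begin{equation*}
b\bigl[a+(a^2+bc)(a+d)\bigr]=0,
\end{equation*}
which forces $a^2+bc=-a/(a+d)$, equivalently $bc=-a(a^2+ad+1)/(a+d)$. A direct multiplication of the prescribed expressions for $b$ and $c$ shows that the two square-root factors are reciprocals, so their product collapses to $1$, and the signs were chosen opposite; this yields exactly $bc=-a(a^2+ad+1)/(a+d)$, so the second tightness equation is automatic.

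Next I would substitute $a^2+bc=-a/(a+d)$ and $ab+bd=b(a+d)$ into the first equation, which simplifies to
\begin{equation*}
1+a^2+\frac{a^2}{(a+d)^2}=b^2\bigl(1+(a+d)^2\bigr).
\end{equation*}
Clearing the $(a+d)^2$ in the denominator gives
\begin{equation*}
b^2=\frac{a^2(a+d)^2+(a+d)^2+a^2}{(a+d)^2\bigl(1+(a+d)^2\bigr)},
\end{equation*}
which is precisely the square of the formula stated for $b$. Hence the first tightness equation also holds.

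The argument is essentially a direct verification, so no real obstacle arises beyond keeping the algebra organized; the only subtle point is ensuring the sign choices in $b$ and $c$ are compatible (they must be opposite so that $bc<0$ has the correct sign to match $-a(a^2+ad+1)/(a+d)$ after taking $a>0$ or $a<0$ cases into account), and that the radicands $a^2(a+d)^2+(a+d)^2+a^2$ and $(a+d)^2+a^2(a+d)^2+a^2$ are positive, which is automatic since each is a sum of squares with $a+d\neq 0$.
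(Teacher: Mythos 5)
Your verification is correct: the two diagram-vector equations you write down are exactly the tightness conditions from Theorem \ref{charTight} for $n=2$, the product of the prescribed $b$ and $c$ does collapse to $bc=-a(a^2+ad+1)/(a+d)$ (the two radicals are reciprocals and the $\pm/\mp$ coupling always contributes a factor $-1$, so no case analysis on the sign of $a$ is actually needed), and substituting $a^2+bc=-a/(a+d)$ into the first equation reproduces precisely the stated value of $b^2$. The paper states Proposition \ref{2tight} without proof, so there is nothing to compare line by line; your argument is a natural direct verification using the paper's own diagram-vector machinery (and is essentially the computation by which the formulas for $b$ and $c$ would be derived in the first place, by solving the system $1+a^2+(a^2+bc)^2=b^2+b^2(a+d)^2$ and $ab+(a^2+bc)b(a+d)=0$ for $bc$ and $b^2$). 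The only hypotheses you rely on beyond the formulas are $a+d\neq 0$ and $b\neq 0$, both of which you justify correctly from the positivity of the radicand.
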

  
 \begin{thm}\label{2scale}
 Let $a, b, c, d $ be real numbers such that 
 $a>0$ and $abcd \neq 0$. Then the following two statements are equivalent:
 \begin{enumerate}
 \item $0< -\frac{ac}{bd} <1$.
 \item The   system 
 $$  F=  \left( \begin{array}{ccc}
   1& a & c \\
   0 & b & d
    \end{array}\right)$$ 
  is a strictly scalable frame for $\R^2$. 
  \end{enumerate}
 \end{thm}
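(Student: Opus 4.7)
The plan is to unpack strict scalability as the existence of a positive solution to a small linear system in the squared scaling coefficients, and then reduce the resulting sign conditions to the stated inequality.

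First, translate strict scalability into algebra. Strict scalability requires $w_1, w_2, w_3 > 0$ such that the column-scaled matrix $FD_w$ satisfies $FD_{w^2}F^T = \lambda I$ for some $\lambda > 0$. Expanding this $2\times 2$ matrix identity (or, equivalently, summing the diagram vectors of the scaled columns and invoking Theorem \ref{charTight}) produces the pair
\begin{align*}
w_1^2 + (a^2-b^2)w_2^2 + (c^2-d^2)w_3^2 &= 0, \\
ab\,w_2^2 + cd\,w_3^2 &= 0.
\end{align*}

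Next, parameterize the solutions. Since $cd \neq 0$, the off-diagonal equation forces $w_3^2 = -\tfrac{ab}{cd}\,w_2^2$. Substituting into the diagonal equation and using the factorization
\[ b^2 cd - a^2 cd - ab d^2 + ab c^2 = (bc-ad)(bd+ac) \]
yields $w_1^2 = \tfrac{(bc-ad)(bd+ac)}{cd}\,w_2^2$. Consequently, strict scalability is equivalent to the two simultaneous sign conditions $-\tfrac{ab}{cd} > 0$ and $\tfrac{(bc-ad)(bd+ac)}{cd} > 0$. The first rearranges immediately to $abcd < 0$, which is the same as $-\tfrac{ac}{bd} > 0$.

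The remaining work is showing that, under $abcd<0$, the second condition is equivalent to $-\tfrac{ac}{bd}<1$. The key algebraic observation is the identity
\[ c^2 \cdot bd \cdot \frac{bc-ad}{cd} \;=\; bc(bc-ad) \;=\; b^2 c^2 - abcd, \]
whose right-hand side is strictly positive whenever $abcd<0$. Thus $\tfrac{bc-ad}{cd}$ and $bd$ share a common sign, so $\tfrac{(bc-ad)(bd+ac)}{cd}>0$ collapses to $bd(bd+ac)>0$, which rearranges directly to $-\tfrac{ac}{bd}<1$. Combining the two inequalities yields $0 < -\tfrac{ac}{bd} < 1$, establishing the equivalence in both directions: given the inequality one recovers explicit positive $w_i$ from the displayed formulas, and conversely any positive scaling forces the sign conditions.

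The main obstacle I anticipate is this sign-matching step. The raw condition for $w_1^2 > 0$ bears no obvious resemblance to the clean bound in the statement, and the identity $bc(bc-ad) = b^2 c^2 - abcd$ is the hinge that makes the equivalence transparent; without it, the reduction looks opaque. I do not expect the extra hypothesis $a>0$ to enter the argument at all — it appears only to standardize the parameterization of $F$.
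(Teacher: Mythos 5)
Your argument is correct, and it reaches the result by a cleaner route than the paper's. You solve the tightness system for the scaled columns once and for all: the off-diagonal equation forces $w_3^2=-\frac{ab}{cd}\,w_2^2$, the diagonal equation together with the factorization $b^2cd-a^2cd+abc^2-abd^2=(bc-ad)(bd+ac)$ forces $w_1^2=\frac{(bc-ad)(bd+ac)}{cd}\,w_2^2$, and positivity of these expressions is converted into $0<-\frac{ac}{bd}<1$ via the identity $bc(bc-ad)=b^2c^2-abcd$; both implications then fall out of this single parameterization. The paper instead handles the two directions separately: for $(1)\Rightarrow(2)$ it exhibits the explicit Parseval weights $x=\sqrt{\frac{ac}{bd}+1}$, $y=\sqrt{\frac{c}{-b(ad-bc)}}$, $z=\sqrt{\frac{a}{d(ad-bc)}}$ without derivation (these are exactly your general solution normalized so the frame bound is $1$), and for $(2)\Rightarrow(1)$ it normalizes the columns, applies the diagram-vector Gramian criterion of Theorem \ref{charScale}, and then runs a case analysis on the signs of $b$, $c$ and $ac+bd$. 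Your route buys a derivation of the weights rather than a verification, replaces the case analysis by the single sign identity, and makes visible your (correct) observation that the hypothesis $a>0$ is never needed for the equivalence, whereas the paper's restatement of condition (1) as two sign chains explicitly invokes it; what the paper's route buys is ready-to-quote Parseval scalings and a showcase of the diagram-vector machinery reused elsewhere in Section \ref{blockdiagOpsubsection}. One small point to add when you write this up: after producing positive $w_1,w_2,w_3$, note that the common diagonal value is $\lambda=b^2w_2^2+d^2w_3^2>0$ (since $b\neq 0$), so the scaled system is genuinely a tight frame and not merely a solution of the two equations.
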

 \begin{proof}
 We first note that the condition 
 $0< -\frac{ac}{bd} <1$
 is equivalent to 
 ($a>0, \,   -\frac{b}{c}> \frac{a}{d}  >0$) or ($a>0, \,   -\frac{d}{a}>  \frac{c}{b} >0$). \\
 $(1)\Rightarrow(2)$:  \quad 
 The conditions  $a>0, \,   -\frac{b}{c}> \frac{a}{d}  >0$ imply that 
 $$ d>0, \,  ad-bc>0,  \,  \frac{ac}{bd} > -1$$ 
 and  the conditions $a>0, \,   -\frac{d}{a}>  \frac{c}{b} >0$ 
  imply that  $$ d<0, \,  ad-bc < 0,  \,  \frac{ac}{bd} > -1. $$ 
 Then 
 $$ x=\sqrt{ \frac{ac}{bd}+1}, \,  y=\sqrt{ \frac{c}{-b(ad-bc)}}, \, z=\sqrt{ \frac{a}{d(ad-bc)}} $$
 are positive numbers  and 
 $$  F=  \left( \begin{array}{ccc}
   x & ya & zc \\
   0 & yb & zd
    \end{array}\right)$$ 
  is a Parseval frame for $\R^2$.  \\
 $(1)\Leftarrow(2)$:  \quad  
 It the system $F$ is strictly scalable, then the normalized system 
 $$F'=\left( \begin{array}{ccc}
   1 & \frac{a}{\sqrt{a^2 + b^2}} & \frac{c}{\sqrt{c^2+d^2}} \\
   0 & \frac{b}{\sqrt{a^2 + b^2}} & \frac{d}{\sqrt{c^2+d^2}}
    \end{array}\right)$$
    is a unit-norm scalable frame. By Theorem \ref{charScale}, the Gramian matrix of diagram vectors of $F'$ has positive scalings in its null space:
 \begin{equation}\label{2x3e1}
 \frac{a^2cd-abc^2+abd^2-b^2cd}{ab(c^2+d^2)}>0,
 \end{equation}
 \begin{equation}\label{2x3e2}
 \frac{-cd(a^2+b^2)}{ab(c^2+d^2)}>0.
 \end{equation}
 Inequality (\ref{2x3e2}) implies that $ -\frac{ac}{bd}>0$. 
 Next we show that $-\frac{ac}{bd}<1$. \\
 In case $b>0$, inequality (\ref{2x3e1}) implies that 
 $$ a^2cd+abd^2 > bc ( ac +bd).$$
 If ($c>0$ and  $ac +bd \ge 0$) or ($c<0$ and  $ac +bd \le 0$), then $ a^2cd+abd^2 >0$, which implies $-\frac{ac}{bd}<1$. 
 If $c>0$ and  $ac +bd < 0$, then $ ac < -bd$, which implies $1 < -\frac{bd}{ac}$ since $ac>0$. 
 Similarly, if $c<0$ and  $ac +bd > 0$, then $ ac > -bd$, which implies $1 < -\frac{bd}{ac}$ since $ac<0$. 
 This is equivalent to $-\frac{ac}{bd}<1$.  \\
 In case $b<0$, suppose that  $-\frac{ac}{bd} \ge 1$. Multiply both sides by the positive number $-abd^2$. On one hand we have 
 $ a^2cd \ge -abd^2 $ and on the other hand,  from inequality (\ref{2x3e1}), we have 
 $a^2cd-abc^2 <-abd^2+b^2cd$. Since $ a^2cd \ge -abd^2 $, we have 
 $-abd^2-abc^2 <-abd^2+b^2cd$, which implies $-\frac{ac}{bd} < 1$. This contradicts  our assumption. 
 \end{proof}
 
 This observation provides us the conditions for a dynamical operator $A$ in $\R^2$ to generate a scalable frame  $F_{\ebf_1}^2  $ for $\R^2$.
 \begin{corollary}\label{2x3scale}
 Let $a, b, c, d $ be real numbers such that 
 $a>0$ and $0< -\frac{a(a^2+bc)}{b^2(a+d)}<1$. 
 Then the  operator $A=   \left( \begin{array}{cc}
    a & c \\
   b & d
    \end{array}\right)$ generates a strictly scalable frame 
      $$F_{\ebf_1}^2   =\left( \begin{array}{ccc}
    1 & a & a^2 + bc \\
   0 & b & ab+bd
    \end{array}\right) .$$
 \end{corollary}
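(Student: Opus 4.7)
The plan is to recognize Corollary \ref{2x3scale} as a direct specialization of Theorem \ref{2scale}, obtained by computing the explicit form of the orbit $\{\ebf_1, A\ebf_1, A^2\ebf_1\}$ and matching columns.

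First, I would compute the iterations directly:
\begin{equation*}
\ebf_1 = \begin{pmatrix} 1 \\ 0 \end{pmatrix}, \quad A\ebf_1 = \begin{pmatrix} a \\ b \end{pmatrix}, \quad A^2\ebf_1 = \begin{pmatrix} a^2 + bc \\ ab + bd \end{pmatrix},
\end{equation*}
so that the frame matrix is
\begin{equation*}
F_{\ebf_1}^2 = \begin{pmatrix} 1 & a & a^2+bc \\ 0 & b & b(a+d) \end{pmatrix}.
\end{equation*}
This is exactly the template $\begin{pmatrix} 1 & a' & c' \\ 0 & b' & d' \end{pmatrix}$ of Theorem \ref{2scale} with the identification $a'=a$, $b'=b$, $c'=a^2+bc$, and $d'=b(a+d)$.

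Next, I would verify the hypotheses of Theorem \ref{2scale}. The assumption $a>0$ gives $a'>0$ immediately. The hypothesis that the expression $-\frac{a(a^2+bc)}{b^2(a+d)}$ is well-defined, strictly positive, and strictly less than $1$ forces $b\neq 0$, $a+d\neq 0$, and $a^2+bc\neq 0$; combined with $a>0$ this yields $a'b'c'd'\neq 0$. Plugging the identifications into the key quantity of Theorem \ref{2scale} gives
\begin{equation*}
-\frac{a'c'}{b'd'} \;=\; -\frac{a(a^2+bc)}{b\cdot b(a+d)} \;=\; -\frac{a(a^2+bc)}{b^2(a+d)},
\end{equation*}
which by hypothesis lies in $(0,1)$.

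With all hypotheses of Theorem \ref{2scale} verified, I conclude that $F_{\ebf_1}^2$ is a strictly scalable frame for $\R^2$. There is no substantial obstacle here — the only thing to be slightly careful about is confirming that the positivity/boundedness of the displayed ratio automatically rules out the degenerate cases $b=0$, $a+d=0$, or $a^2+bc=0$, which is what allows a clean invocation of Theorem \ref{2scale} without having to argue those cases separately.
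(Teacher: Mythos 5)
Your proposal is correct and matches the paper's (implicit) argument: the corollary is stated as a direct consequence of Theorem \ref{2scale}, obtained by computing $F_{\ebf_1}^2=\left(\begin{array}{ccc}1 & a & a^2+bc\\ 0 & b & b(a+d)\end{array}\right)$ and applying the theorem with $c$ and $d$ replaced by $a^2+bc$ and $b(a+d)$, exactly as you do. Your added check that the hypothesis $0<-\frac{a(a^2+bc)}{b^2(a+d)}<1$ forces the nondegeneracy conditions $b\neq 0$, $a+d\neq 0$, $a^2+bc\neq 0$ is a correct and welcome detail the paper leaves unstated.
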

 
 If $2 \sin^2(\omega)-1 >0$, then the  operator 
 $$A= \left( \begin{array}{cc}
            \cos(\omega) & -\sin(\omega) \\
            \sin(\omega)  &  \cos(\omega)
          \end{array}\right)$$
 satisfies the condition on Theorem \ref{2scale}. Consequently we have:
 
 \begin{ex}
 Let 
     $$A= \left( \begin{array}{cc}
            \cos(\omega) & -\sin(\omega) \\
            \sin(\omega)  &  \cos(\omega)
          \end{array}\right),$$
 where $2 \sin^2(\omega)-1 >0$. 
 Then the   operator $A$ generates a strictly scalable  frame 
   $$F_{\ebf_1}^2  = \left( \begin{array}{ccc}
           1&  \cos(\omega) & \cos(2\omega) \\
            0& \sin(\omega)  &  \sin(2\omega)
          \end{array}\right). $$
 
 \end{ex}

  \begin{proposition}\label{2x4scale}
 Let $a, b, c, d $ be real numbers such that $abcd<0$. 
 Then the   system 
 $$  F=  \left( \begin{array}{cccc}
   1 & 0& a & c \\
   0 & 1 & b & d
    \end{array}\right)$$ 
 is a strictly scalable frame for $\R^2$. 
 \end{proposition}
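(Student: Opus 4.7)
The plan is to exhibit, by hand, strictly positive scaling coefficients $w_1,w_2,w_3,w_4$ that turn $F$ into a tight frame. With these scalings in place, the scaled frame matrix becomes
\begin{equation*}
F_w = \begin{pmatrix} w_1 & 0 & w_3 a & w_4 c \\ 0 & w_2 & w_3 b & w_4 d \end{pmatrix},
\end{equation*}
so tightness is equivalent to $F_w F_w^* = \lambda I$ for some $\lambda>0$, i.e.\ to the two scalar conditions
\begin{equation*}
w_3^2\, ab + w_4^2\, cd \;=\; 0, \qquad
w_1^2 + w_3^2 a^2 + w_4^2 c^2 \;=\; w_2^2 + w_3^2 b^2 + w_4^2 d^2.
\end{equation*}
Note first that the hypothesis $abcd<0$ forces each of $a,b,c,d$ to be nonzero, which will allow all of the quantities appearing below to be strictly positive.

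The first step is to solve the off-diagonal equation. Since $abcd<0$, the products $ab$ and $cd$ have opposite (and nonzero) signs, so the ratio $-cd/(ab)$ is strictly positive. Setting
\begin{equation*}
w_3 = \sqrt{|cd|}, \qquad w_4 = \sqrt{|ab|},
\end{equation*}
both are strictly positive, and one checks directly that $w_3^2 ab + w_4^2 cd = |cd|\,ab + |ab|\,cd = 0$, because exactly one of $ab,cd$ equals the negative of its absolute value.

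The second step is to satisfy the diagonal equation with the values of $w_3,w_4$ now fixed. Write
\begin{equation*}
M \;=\; w_3^2(b^2-a^2) + w_4^2(d^2-c^2) \;=\; |cd|(b^2-a^2)+|ab|(d^2-c^2),
\end{equation*}
so that the remaining condition becomes $w_1^2 - w_2^2 = M$. This always admits strictly positive solutions: if $M\ge 0$, take $w_2=1$ and $w_1=\sqrt{1+M}$; if $M<0$, take $w_1=1$ and $w_2=\sqrt{1-M}$. In both cases $w_1,w_2>0$, and together with the choices from Step 1 we obtain strictly positive scaling coefficients making $F_w$ a tight frame, which is exactly the definition of $F$ being strictly scalable.

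There is no real obstacle here; the one point that requires care is observing that $abcd<0$ is precisely what is needed to make the off-diagonal equation solvable with positive coefficients (the same sign condition arises in the general $n=2$ analysis behind Theorem \ref{2scale}), after which the diagonal equation has two free positive parameters and so is always solvable. One could alternatively invoke the diagram-vector criterion of Theorem \ref{charScale}, but the direct construction above is quicker and yields explicit scalings.
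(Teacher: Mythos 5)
Your proof is correct and takes essentially the same route as the paper: both arguments solve the row-orthogonality condition $w_3^2ab+w_4^2cd=0$ using the sign hypothesis $abcd<0$ and then adjust the scalings of the two standard-basis columns to equalize the row norms. The only difference is cosmetic --- the paper normalizes to a Parseval frame via a free parameter $s$ chosen small enough to keep its radicands positive, while you only require tightness, which avoids that choice.
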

 \begin{proof}
 We define 
  $$ p= \sqrt{ \left( \frac{acd}{b} -c^2 \right) s^2 + 1 },  \,q= \sqrt{ \left(\frac{bcd}{a} +d^2\right) s^2 + 1}, \, r =\sqrt{-\frac{cd}{ab}}. $$
 For any $a, b, c, d$ such that $abcd<0$, one can select $s$ such that  $p>0$ and $q>0$. Those choices of $p, q, r, s$ guarantee that 
 the system 
 $$  F=  \left( \begin{array}{cccc}
   p &0 & ra & sc \\
   0 & q &  rb & sd
    \end{array}\right)$$ 
  is a Parseval frame. 
 \end{proof}
 
 \begin{corollary}\label{2x4scale}
 Let $a, b$ be real numbers such that $a+b^2<0$. 
 Then the   operator $A=   \left( \begin{array}{cc}
    0 & a \\
   1 & b
    \end{array}\right)$ generates a strictly scalable frame $F_{\ebf_1}^3  $ for $\R^2$.   
 \end{corollary}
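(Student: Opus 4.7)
The plan is to compute the frame $F_{\ebf_1}^3$ explicitly and then recognize its columns as matching the hypothesis of Proposition \ref{2x4scale}, leaving only a short arithmetic verification and one trivial degenerate case.

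First I would iterate: $\ebf_1 = (1,0)^T$, $A\ebf_1 = (0,1)^T = \ebf_2$, $A^2 \ebf_1 = A\ebf_2 = (a,b)^T$, and $A^3 \ebf_1 = A(a,b)^T = (ab,\, a+b^2)^T$. Hence the frame matrix is
\begin{equation*}
F_{\ebf_1}^3 = \begin{pmatrix} 1 & 0 & a & ab \\ 0 & 1 & b & a+b^2 \end{pmatrix},
\end{equation*}
which is exactly the shape $\begin{pmatrix} 1 & 0 & a' & c' \\ 0 & 1 & b' & d' \end{pmatrix}$ appearing in Proposition \ref{2x4scale}, with the identifications $a' = a$, $b' = b$, $c' = ab$, $d' = a+b^2$.

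Next I would check the hypothesis of Proposition \ref{2x4scale}, namely $a'b'c'd' < 0$. A direct computation gives
\begin{equation*}
a'b'c'd' = a \cdot b \cdot ab \cdot (a+b^2) = a^2 b^2 (a+b^2).
\end{equation*}
The assumption $a+b^2 < 0$ forces $a < 0$ (in particular $a \neq 0$), so provided $b \neq 0$ the quantity $a^2 b^2$ is strictly positive and $a^2 b^2 (a+b^2) < 0$, so Proposition \ref{2x4scale} applies and yields strict scalability.

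The only thing not covered by a direct invocation is the degenerate possibility $b = 0$, which I would dispose of by hand: when $b=0$ the hypothesis reduces to $a<0$, and
\begin{equation*}
F_{\ebf_1}^3 = \begin{pmatrix} 1 & 0 & a & 0 \\ 0 & 1 & 0 & a \end{pmatrix},
\end{equation*}
which is the union of two orthogonal pairs and is obviously tight (hence trivially scalable with all positive weights equal to $1$ on the first two columns and $1/|a|$ on the last two). I do not anticipate any real obstacle here; the main bookkeeping issue is merely to record that $a + b^2 < 0$ rules out $a=0$, so only the $b=0$ boundary case has to be handled outside Proposition \ref{2x4scale}.
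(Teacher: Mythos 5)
Your proof is correct and is essentially the intended argument: the paper states this corollary without a separate proof, as a direct application of the preceding proposition (the $abcd<0$ criterion for systems of the form $\left(\begin{smallmatrix}1 & 0 & a & c\\ 0 & 1 & b & d\end{smallmatrix}\right)$) to the computed orbit $F_{\ebf_1}^3=\left(\begin{smallmatrix}1 & 0 & a & ab\\ 0 & 1 & b & a+b^2\end{smallmatrix}\right)$, exactly as you do. Your explicit handling of the boundary case $b=0$ (where the product $a^2b^2(a+b^2)$ vanishes, so the proposition does not literally apply, but the system is already tight since its frame operator is $(1+a^2)I$) is a small but genuine improvement over the paper's implicit derivation.
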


 We next explore when a dynamical operator $A$ generates a scalable frame $F_{\ebf_1}^3 $ in $\R^3$.  
 We first observe  the following  systems in $\R^3$ when $ab \neq 0$
 \begin{equation}\label{twosystemsdynscal}
  F1 =   \left( \begin{array}{ccccc}
  1 & 0 & 0 & x & y \\
   0 &  1 &  0 & a& c \\
   0 & 0 & 1& b & d \\
   \end{array}\right), \quad 
   F2 =   \left( \begin{array}{ccccc}
  1 & 0 &  x & y \\
   0 &  1 &   a& c \\
   0 & 0 &  b & d \\
   \end{array}\right).
  \end{equation}
  If $F$ is a tight frame, by Theorem \ref{charTight}, we have 
 \begin{equation} \label{onlytwo}
 \begin{array} {ccc}
  ax + cy &=& 0\\
  bx+dy &=&0 \\
  ab+cd&=& 0,
  \end{array}
   \end{equation}
 which implies that $x=y=0$.  
 That is, the last two vectors have only two nonzero elements in the same entries. 
 
 We note that if the first column of $A$ is ${\ebf}_1$, then the system $F_{\ebf_1}^3 $ can not be a frame for $\R^3$. 
 Let 
   \begin{equation}\label{genmatrR3}
  A =   \left( \begin{array}{ccc}
   0 & a & x\\
   1 &  b &  y \\
   0 &   c & z
   \end{array}\right). 
     \end{equation} 
 Then the corresponding $F_4$ system has the following entries:  
 $$
 F_{\ebf_1}^3   =   \left( \begin{array}{cccc}
   1 & 0 & a & ab+cx\\
   0 &  1 &  b & b^2+cy + a \\
   0 &   0 & c & bc +cz
   \end{array}\right). 
 $$
 By  (\ref{onlytwo}), for the system $F_{\ebf_1}^3 $ to be a strictly scalable frame, we need to assume 
 $ a=ab+cx=0$ or $b = b^2+cy + a =0$. 
 We first consider the case $ a=ab+cx=0$. 
 \begin{proposition}\label{prop7import}
 Let $a, b, c, d $ be real  
 numbers such that 
 $a>0$ and $0< -\frac{a(a^2+bc)}{b^2(a+d)}<1$. 
 Then the   operator 
   \begin{equation}\label{nonhermandherm}
  A =   \left( \begin{array}{ccc}
   0 & 0 & 0\\
   1 &  a &  c \\
   0 &   b & d
   \end{array}\right)
     \end{equation} 
      generates a strictly 
      scalable frame
   \begin{equation}\label{26}
  F_{\ebf_1}^3  =   \left( \begin{array}{cccc}
   1 & 0& 0 & 0\\
   0 & 1 & a &   a^2 + bc \\
   0 &  0&  b &   ab+bd
   \end{array}\right).
     \end{equation} 
 \end{proposition}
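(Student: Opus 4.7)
The plan is to recognize $F_{\ebf_1}^3$ as having a block-diagonal structure that decomposes $\R^3 = \R \oplus \R^2$, and then reduce the problem to the already-established two-dimensional case in Corollary \ref{2x3scale}.

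First I would verify the claimed matrix by direct computation: starting from $\ebf_1=(1,0,0)^T$, the zero top row of $A$ forces the first coordinate of $A^j\ebf_1$ to vanish for every $j\ge 1$, and routine multiplication yields $A\ebf_1=(0,1,0)^T$, $A^2\ebf_1=(0,a,b)^T$, and $A^3\ebf_1=(0,a^2+bc,ab+bd)^T$. Thus the matrix $F_{\ebf_1}^3$ displayed in (\ref{26}) splits as a block-diagonal matrix with upper-left block $[1]$ (a $1\times 1$ Parseval frame for the span of $\ebf_1\cong\R$) and lower-right block
\[
F' \;=\; \begin{pmatrix} 1 & a & a^2+bc \\ 0 & b & ab+bd \end{pmatrix},
\]
which lives in the orthogonal complement $\mathrm{span}\{\ebf_2,\ebf_3\}\cong\R^2$.

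Next I would note that $F'$ is precisely the frame $F_{\ebf_1}^2$ generated by the $2\times 2$ operator $\begin{pmatrix} a & c \\ b & d\end{pmatrix}$ in $\R^2$ (as computed in Proposition \ref{2tight}). The hypothesis $a>0$ and $0<-\tfrac{a(a^2+bc)}{b^2(a+d)}<1$ is exactly the hypothesis of Corollary \ref{2x3scale}, so that corollary yields strict positive scaling coefficients making $F'$ a Parseval frame for $\R^2$.

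Finally I would invoke Theorem \ref{stackScale} (or equivalently the block-diagonal case of Theorem \ref{blockresultbig}): the $1\times 1$ trivial Parseval frame for $\R$ together with the strictly scalable frame $F'$ for $\R^2$ assemble along the block-diagonal into a strictly scalable frame for $\R\oplus\R^2 = \R^3$, giving the desired conclusion. The scaling weight on $\ebf_1$ can be taken to be $1$ and the three weights on the columns of $F'$ are those supplied by Corollary \ref{2x3scale}. There is no essential obstacle here beyond recognizing the block decomposition; the work has been done in the two-dimensional analysis, and the role of Proposition \ref{prop7import} is to illustrate how the block-diagonal mechanism lifts scalability from $\R^2$ to $\R^3$ via a non-diagonalizable dynamical operator.
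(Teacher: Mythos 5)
Your proposal is correct and follows essentially the same route as the paper, which also deduces the result by applying the two-dimensional scalability criterion (Theorem \ref{2scale}, equivalently Corollary \ref{2x3scale}) to the lower-right block and then stacking with the trivial $1\times 1$ block via Theorem \ref{stackScale}. Your explicit computation of the iterates $A^j\ebf_1$ is a helpful verification the paper leaves implicit, but it does not change the argument.
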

 \begin{proof}
 This follows from Theorem \ref{stackScale} and Theorem \ref{2scale}.
 \end{proof}
 
 When $b = b^2+cy + a =0$,  we have 
 $$  A =   \left( \begin{array}{ccc}
   0 & a & x\\
   1 &  0 &  -a/c \\
   0 &   c & cz
   \end{array}\right).
 $$
 By applying row and column permutations, $F_{\ebf_1}^3 $ can be written in the same form as (\ref{26}). 
 Similarly, the  following operator, with a suitable choice of the second and third column:
   \begin{equation}\label{genmatrR3}
  A =   \left( \begin{array}{ccc}
   0 & a & x\\
   0 &  b &  y \\
   1 &   c & z
   \end{array}\right)
     \end{equation} 
 generates a scalable frame $F_{\ebf_1}^3 $, which also can be written in the same form as (\ref{26}).

  %%%%%%%%%%%%%%%%%%%%%%%
 %%%%%%%%%%%%%%%%%%%%%%%
 %%%%%%%%%%%%%%%%%%%%%%%

 We note that  any tight or scalable frame in $\R^n$ with $n$ frame vectors is an orthogonal basis.  A trivial example of a scalable dynamical  frame is the following: 
   \begin{ex}\label{examplelemma} Let  
    \begin{equation}\label{companion}
    A  =   \left( \begin{array}{cc}
    0 &  1 \\
    I_{n-1}&  0  \\
      \end{array}\right) .   \end{equation} 
   Then the sequence  $F_{\ebf_1} ^ L  $ is a scalable frame of $\R^n$  if and only if $L \geq n$.
  \end{ex}
 For instance, when $n= L=3$, the resulting  frame is  $F_{\ebf_1}^3 =\{  \ebf_1, \ebf_2,\ebf_3, \ebf_1\}$, and  the scaled frame   $ \{ {2}^{-1/2}\ebf_1, \ebf_2,\ebf_3,2^{-1/2}\ebf_1\}$ is a Parseval frame.

 Notice that \eqref{companion} is an example of a  companion \cite{HJ85}  operator. It makes sense to explore the conditions under which a companion operator  generates a scalable frame.  
 
 %%%%%%%%%%%%%%%%%%%%%%%
%%%%%%%%%%%%%%%%%%%%%%%
%%%%%%%%%%%%%%%%%%%%%%%
\section{Companion operators and generalizations}\label{compansection}

Let $a_1, \ldots, a_n \in \R$ which are not all zeros, then 
 \begin{equation}\label{companiondef}
 A  =   \left( \begin{array}{c|c}
   0 &   a_1 \\
   \hline
         &  a_2\\        
   I_{n-1}&  \vdots  \\
    & a_n\\
     \end{array}\right)  \end{equation}
is called a companion operator \cite{HJ85}.

 \begin{proposition}
 Let  the dynamical operator $A$ be a companion operator \eqref{companiondef} in $\R^n$, then  we have 
\begin{enumerate}
\item $ F_{\ebf_1}^{n-1} = I. $
\item  for any orthogonal matrix $U$, the    operator $UAU^{-1}$  generates an orthonormal basis $U$.
\end{enumerate}
  \end{proposition}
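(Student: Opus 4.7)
The plan is to verify both claims by direct computation, exploiting only the block structure of the companion matrix and the elementary behavior of iterates under conjugation.

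For part (1), I would first read off the columns of $A$ from its block decomposition. The lower-left $(n-1)\times(n-1)$ block is $I_{n-1}$ and the top row of that block is zero, so for $1 \leq j \leq n-1$ the $j$-th column of $A$ is precisely $\ebf_{j+1}$. Consequently $A\ebf_j = \ebf_{j+1}$ for $1 \leq j \leq n-1$. A one-line induction on $k$ then yields $A^k\ebf_1 = \ebf_{k+1}$ for $0 \leq k \leq n-1$, and the last column of $A$ (which contains the $a_i$'s) is never touched because the iteration terminates at $A^{n-1}\ebf_1 = \ebf_n$. Assembling these iterates as columns of a matrix gives exactly the $n\times n$ identity, which is statement (1).

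For part (2), I would combine part (1) with the standard identity $(UAU^{-1})^k = UA^kU^{-1}$. Applied to the initial vector $U\ebf_1$, this gives
\[
(UAU^{-1})^k U\ebf_1 \;=\; UA^k U^{-1}U\ebf_1 \;=\; UA^k\ebf_1 \;=\; U\ebf_{k+1}
\]
for $0 \leq k \leq n-1$ by part (1). Thus the dynamical system $F_{U\ebf_1}^{\,n-1}(UAU^{-1})$ has columns $U\ebf_1, U\ebf_2, \ldots, U\ebf_n$, which as a matrix is $U$ itself; since $U$ is orthogonal, these columns form an orthonormal basis. Alternatively, one could just invoke Theorem~\ref{123} with $B = U$ and $\fbf_1 = \ebf_1$: part (1) supplies the hypothesis that $F_{\ebf_1}^{\,n-1}(A)$ is a frame (in fact the identity), and the theorem identifies its image under $U$ with $F_{U\ebf_1}^{\,n-1}(UAU^{-1})$, while orthogonality of $U$ upgrades this frame to an orthonormal basis.

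There is no substantive obstacle here; both parts reduce to recognizing that the first $n-1$ columns of the companion matrix are the shifted standard basis vectors $\ebf_2,\ldots,\ebf_n$. The only care needed is to parse the block form of $A$ correctly and to observe that the iteration index $L = n-1$ is precisely chosen so that the column $(a_1,\ldots,a_n)^T$ is never invoked.
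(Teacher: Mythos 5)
Your proof is correct; the paper in fact states this proposition without any proof, and your direct computation (the first $n-1$ columns of \eqref{companiondef} are $\ebf_2,\dots,\ebf_n$, so $A^k\ebf_1=\ebf_{k+1}$ for $0\le k\le n-1$, and conjugation by $U$ carries this to the columns of $U$) is evidently the intended argument. Your only interpretive step — reading part (2) as the system generated from the initial vector $U\ebf_1$, in line with Theorem \ref{123} — is the right reading, since starting from $\ebf_1$ itself would not produce the columns of $U$ in general.
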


It is known that the standard orthonormal basis $B$ can not be extended to a scalable frame by adding one vector $\fbf \in \cH \setminus B$,  
\cite{DKN15, KOF13}. Thus we explore when one can generate a dynamical frame by adding two vectors. 
Although a companion operator $A$ does not generate a scalable frame $F_{\ebf_1}^{n}  $, it can generate a scalable frame $F_{\ebf_1}^{n+1}  $ under certain conditions. 
Using the companion operator $A$, we have 
  \begin{equation}\label{thissystemisframe}
 F_{\ebf_1}^{n}  = (\ebf_1 \ldots \ebf_{n} \, \, \fbf ), \quad 
F_{\ebf_1}^{n+1}   = (\ebf_1 \ldots \ebf_{n} \, \, \fbf  \,\,  \gbf), 
  \end{equation}
  where 
 \[  \fbf= \left( \begin{array}{c}
 a_1 \\
 a_2 \\
 a_3\\
 \vdots\\
   a_{n-1} \\
  a_n 
 \end{array}\right) \text{ and } 
  \gbf= \left( \begin{array}{c}
 a_1a_n \\
 a_1+ a_2a_n \\
 a_2+ a_3a_n\\
 \vdots\\
 a_{n-2} + a_{n-1} a_n\\
 a_{n-1} +a_n^2
 \end{array}\right). 
  \]
Similar calculations as in observation (\ref{onlytwo}) produce the following result: 
\begin{proposition}
\label{ext}
\cite{DKN15} Let $\{{\ebf}_1, \ldots {\ebf}_n\}$ be the standard orthonormal basis in $\R^n$ with $n \ge 2$. Let $\fbf$ and $\gbf$ be two unit-norm vectors in $\R^n$.  

If either system $\{{\ebf}_1, \ldots {\ebf}_n, \fbf, \gbf\}$ or $\{{\ebf}_1, \ldots {\ebf}_{n-1}, \fbf, \gbf\}$ is scalable, then 
$\fbf$ and $\gbf$ have only two nonzero elements in the same entries. 

\end{proposition}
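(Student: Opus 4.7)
The plan is to apply the diagram vector characterization of tight frames (Theorem \ref{charTight}) together with the structural fact that standard basis vectors contribute nothing to the ``product'' block of the diagram vector. Since the entries of $\tilde{\fbf}$ are quadratic in the coordinates of $\fbf$, scaling by $c\ge 0$ multiplies the diagram vector by $c^2$, so tightness of the scaled system $\{c_i \ebf_i\}_{i=1}^n \cup \{c_\fbf \fbf,\, c_\gbf \gbf\}$ is equivalent to
\[
\sum_{i=1}^{n} c_i^2 \,\tilde{\ebf}_i + c_\fbf^2 \,\tilde{\fbf} + c_\gbf^2 \,\tilde{\gbf} = 0.
\]
The key observation is that each $\ebf_i$ has at most one nonzero coordinate, so every product entry $\sqrt{2n}\,\ebf_i(k)\ebf_i(l)$ of $\tilde{\ebf}_i$ vanishes. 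Projecting the identity onto those coordinates therefore erases the basis vectors altogether.

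Reading the surviving identity coordinate-wise yields
\[
c_\fbf^2\, \fbf(i)\fbf(j) + c_\gbf^2\, \gbf(i)\gbf(j) = 0 \qquad \text{for every } i<j.
\]
Setting $\alpha_k := c_\fbf \fbf(k)$ and $\beta_k := c_\gbf \gbf(k)$ reduces the whole problem to the algebraic system $\alpha_i \alpha_j = -\beta_i \beta_j$ for all $i \neq j$, and the proposition becomes the claim that $\alpha$ and $\beta$ share a common support of size at most two.

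The heart of the argument is to rule out three nonzero entries. Suppose $\alpha_i,\alpha_j,\alpha_k \neq 0$; the three defining relations then force $\beta_i,\beta_j,\beta_k \neq 0$, and dividing any two of them gives $\alpha_i/\beta_i = \alpha_j/\beta_j = \alpha_k/\beta_k =: \lambda \in \R$. Substituting back into $\alpha_i \alpha_j = -\beta_i \beta_j$ produces $\lambda^2 = -1$, impossible over $\R$. Hence $|\mathrm{supp}(\alpha)|, |\mathrm{supp}(\beta)| \leq 2$. If $\alpha$ is supported on $\{i_0,j_0\}$ with $\alpha_{i_0} \neq 0$ and $\beta_{i_0} \neq 0$, then for any $k \notin \{i_0,j_0\}$ the relation $0 = \alpha_{i_0}\alpha_k = -\beta_{i_0}\beta_k$ pins $\beta_k = 0$, so the two supports coincide inside a common pair of coordinates.

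The shortened system $\{\ebf_1,\ldots,\ebf_{n-1},\fbf,\gbf\}$ is handled identically, since the argument used only the vanishing of the product entries of $\tilde{\ebf}_i$. The main (but minor) obstacle I anticipate is bookkeeping around degenerate cases, notably when one of the scalars $c_\fbf,c_\gbf$ is zero, or when $\alpha$ and $\beta$ are supported on $\{i_0,j_0\}$ yet $\beta_{i_0}=0$ while $\alpha_{i_0}\neq 0$; each such situation collapses by direct inspection of the same tightness identity rather than by any new idea.
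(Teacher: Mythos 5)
Your proposal is correct and follows essentially the route the paper itself indicates: the paper gives no proof (it cites \cite{DKN15}) beyond pointing to ``similar calculations as in observation \eqref{onlytwo}'', which is exactly your diagram-vector argument via Theorem \ref{charTight}, using that the product entries of $\tilde{\ebf}_i$ vanish so only the relations $\alpha_i\alpha_j=-\beta_i\beta_j$ survive. The only caveat is that your degenerate case $c_\fbf=0$ or $c_\gbf=0$ does not ``collapse by inspection'' but must be excluded by reading scalability here as requiring positive weights on $\fbf$ and $\gbf$ (strict scalability), which is the intended interpretation of the cited statement.
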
 

We now assume that  $F_{\ebf_1}^{n+1}  $  is scalable. Then by Proposition \ref{ext}, 
$a_m=0$ implies that $a_{m-1}=0$ for $m\ge 2$. This implies that $a_1=\ldots = a_{n-2}=0$.

\begin{proposition}\label{companionstandardresult}
Let $a$ and $b$ be real numbers such that 
$a>0$ and $0< -\frac{a^2}{a+b^2}<1$. 
Then the companion operator $A$ in $\R^n$, 
  \begin{equation}\label{gennonherm}
 A =   \left( \begin{array}{cccccc}
 0 & 0 & ...&0& 0 & 0 \\
 1 &  0 & ...&0& 0& 0 \\
  &.&.&.&.& \\
  0 & 0 &    ...&1 &  0  & a\\
 0 & 0 &    ...&0&   1 & b
 \end{array}\right)
  \end{equation}
generates a strictly scalable frame $F_{\ebf_1}^{n+1}  $.  
\end{proposition}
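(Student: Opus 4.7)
The plan is to exploit the sparsity pattern of the companion matrix $A$: since $a_1 = \cdots = a_{n-2} = 0$, the orbit $\{A^j \ebf_1\}_{j=0}^{n+1}$ splits into two coordinate-disjoint groups, producing a block-diagonal frame structure that reduces the problem to an $\R^2$-calculation already handled by Theorem \ref{2scale}.

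First, compute the orbit explicitly. Because the first subdiagonal of $A$ consists of $1$'s and its last column is $a\ebf_{n-1} + b\ebf_n$, a short induction gives $A^j \ebf_1 = \ebf_{j+1}$ for $0 \le j \le n-1$, followed by $A^n \ebf_1 = a\ebf_{n-1} + b\ebf_n$ and $A^{n+1}\ebf_1 = ab\,\ebf_{n-1} + (a+b^2)\ebf_n$. The first $n-2$ iterates are supported on $V_1 = \mathrm{span}\{\ebf_1, \ldots, \ebf_{n-2}\}$, and the last four iterates on $V_2 = \mathrm{span}\{\ebf_{n-1}, \ebf_n\}$. After a column permutation, which preserves scalability, the frame matrix becomes block-diagonal with blocks on $V_1$ and $V_2$; by Theorem \ref{stackScale}, the full system is strictly scalable if and only if each block is.

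The $V_1$-block is the standard basis of $V_1$, which is trivially strictly scalable. The $V_2$-block, identified with $\R^2$, is the $2 \times 4$ matrix
\[
F' = \begin{pmatrix} 1 & 0 & a & ab \\ 0 & 1 & b & a+b^2 \end{pmatrix}.
\]
Its columns $1, 3, 4$ form a $2 \times 3$ sub-frame to which Theorem \ref{2scale} applies with the identification $(a_{\mathrm{thm}}, b_{\mathrm{thm}}, c_{\mathrm{thm}}, d_{\mathrm{thm}}) = (a, b, ab, a+b^2)$; the required inequalities $a_{\mathrm{thm}} > 0$ and $0 < -a_{\mathrm{thm}} c_{\mathrm{thm}}/(b_{\mathrm{thm}} d_{\mathrm{thm}}) < 1$ collapse to $a > 0$ and $0 < -a^2/(a+b^2) < 1$, which are exactly the hypotheses of the proposition. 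This supplies positive weights $s_1', s_3', s_4'$ that make the $2 \times 3$ sub-frame Parseval.

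To finish, I would lift these weights to a strictly positive weighting of the whole $2 \times 4$ block. The off-diagonal entry of the resulting frame operator, $s_3^2 ab + s_4^2 ab(a+b^2)$, depends only on $(s_3, s_4)$, so taking $s_3 = s_3'$ and $s_4 = s_4'$ keeps it zero. Choosing any $s_2 > 0$ on the column $\ebf_n$ and then setting $s_1 = \sqrt{s_1'^2 + s_2^2}$ equalizes the two diagonal entries, yielding a tight frame with all four weights strictly positive. The main technical item is this lifting step: carefully tracking the diagonal entries of the frame operator during the passage from the $2 \times 3$ Parseval setup to the $2 \times 4$ setup, and confirming that the needed $s_1$ is strictly positive under the standing hypotheses.
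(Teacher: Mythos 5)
Your proposal is correct and follows essentially the same route as the paper: compute the orbit, observe the block-diagonal structure $I_{n-2}\oplus\left(\begin{smallmatrix}1&0&a&ab\\0&1&b&a+b^2\end{smallmatrix}\right)$, and reduce via Theorem \ref{stackScale} to a two-dimensional application of Theorem \ref{2scale} with $(a,b,c,d)=(a,b,ab,a+b^2)$. Your explicit lifting step that brings the column $(0,1)^T$ back in with a strictly positive weight (keeping $s_3,s_4$ and setting $s_1=\sqrt{s_1'^2+s_2^2}$) is valid and simply fills in a detail that the paper's one-line proof leaves implicit.
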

\begin{proof}
 We have 
\begin{equation}\label{nicegen}
    F_{\ebf_1}^{n+1}   =\left( \begin{array}{ccccc}
   I_{n-2} &&&\\
   & 1&  0 & a & ab \\
    & 0& 1& b & a+b^2 \\
       \end{array}\right). 
   \end{equation}
   The strict scalability follows from Theorem \ref{2scale} and Theorem \ref{stackScale}. 
  \end{proof}
     We note that the operator $A$ in (\ref{gennonherm}) is not diagonalizable. Next, we   generalize   the structure of $A$ while ensuring that the new matrix generates  scalable frames by iterative actions. 
   \begin{ex}\label{g1}
Let $a$ and $b$ be real numbers such that   $0< -\frac{a(a^2+bc)}{b^2(a+d)}<1$ and 
$a>0$. 
Then the operator  
 \begin{equation}\label{nicegen}
    A =\left( \begin{array}{ccccc}
    0 & 0 & 0 & \ldots & 0\\
    1 &  0 &  0 &\ldots & 0\\
    0 & 1 & 0 & \ldots & 0\\
     \vdots & \vdots & \vdots &\vdots\\
    0 &  \ldots & 1 & a & c \\
     0 &  \ldots & 0 & b  & d
    \end{array}\right)
   \end{equation}
generates a strictly scalable frame $F_{\ebf_1}^{n}  $ for $\R^n$.

%\textcolor{red}{Companion operators which generalize scalable frames }
 \end{ex}
 
 \begin{proof}
 
 We have 
\begin{equation}\label{nicegen}
   F_{\ebf_1}^{n}   =\left( \begin{array}{cccc}
   I_{n-2} &&&\\
   & 1& a & a^2+bc \\
    & 0& b & ab+bd \\
       \end{array}\right). 
   \end{equation}
   The strict scalability  follows by Proposition \ref{2tight} and Proposition \ref{stackScale}. 
  \end{proof}

  \begin{ex}
Let $2 \sin^2(\omega)-1 >0$.  Then 
 \begin{equation}\label{nicegen}
    A =\left( \begin{array}{ccccc}
    0 & 0 & 0 & \ldots & 0\\
    1 &  0 &  0 &\ldots & 0\\
    0 & 1 & 0 & \ldots & 0\\
     \vdots & \vdots & \vdots &\vdots\\
    0 &  \ldots & 1 & \cos(\omega) & -\sin(\omega) \\
     0 &  \ldots & 0 &  \sin(\omega)  & \cos(\omega) 
    \end{array}\right)
   \end{equation}
generates a strictly scalable frame $F_{\ebf_1}^{n}$.
 \end{ex}
 
 %\textcolor{red}{what is $s_1$?}
    \begin{ex}
  Let $2 \sin^2(\phi)-1 >0$ and let 
    \begin{equation}\label{realschursimple}
  A = \left( \begin{array}{cccccc}  
  \pm 1 & 0 & 0 & 0  &  \ldots & 0 \\
  0 & \pm 1 & 0 &  0  &  \ldots & 0 \\
  \vdots  & \vdots  & \vdots &   \vdots & \vdots & \vdots \\
  0& 0 & \ldots &  \pm 1&  0 & 0 \\
  0& 0 & \ldots & 0&  \cos \phi &  -\sin \phi \\
  0& 0 & \ldots & 0&  \sin \phi & \cos \phi
   \end{array}\right).\end{equation}
    The set 
   \begin{equation}
   \{  \ebf_{n-1}, A\ebf_{n-1}, A^2\ebf_{n-1}\} \cup \bigcup_{l=1}^{n-2} \{ \ebf_l, A \ebf_l, \ldots, A^{L_l} \ebf_l  \}
   \end{equation}
  is a strictly scalable frame of $\R^n$.  \end{ex}

\section{Concluding remarks and generalizations}\label{conclusion}
We have studied the scalability of dynamical frames in a separable  Hilbert space $\cH$. Given an operator $A$ on $\cH$ and a (at most countable)  set    $G \subset \cH$, 
we have explored the relations between $A$, $G$ and the   number of iterations   that make the system \eqref{oursystem} a scalable frame. When $\dim \cH$ is finite, and  $A$ is a normal operator, we have fully answered question (Q1). 

Since we have not achieved a full answer for   operators which are not unitary diagonalizable, we have offered a partial answer by studying block-diagonal operators,   which are not necessarily normal. Note that the block-diagonal matrix $A$ in Theorem \ref{blockresultbig} cannot be normal if one of its blocks is not normal. Also, we have established the canonical dual frame for  frames of type $F_G(A)$; in particular, we showed that   the canonical dual frame   has, as anticipated, an iterative set structure. This result holds true in any separable Hilbert space $\cH$.

We now pose a new question, which is a generalization of (Q1):

\vspace{2.1mm}

(Q3) \textit{   Given   multiple operators $A_s$, $s \in I$ on a separable Hilbert space $\cH$, and one fixed vector $\vbf \in \cH$, when is the system $\cup_{s \in I} \{ A_s ^j \vbf \}_{j=0}^{L_s}$ a   (scalable)  frame for $\cH$?}

\vspace{2mm}

The next example shows how to generate a scalable frame for $\R^3$ using two dynamical operators. 
    \begin{ex}\label{multigenscalable}  {Let } $\alpha =2\pi/3$,  
\[   A_1 =  \left( \begin{array}{ccc}
  \cos{\alpha } & -\sin{\alpha}  &0\\
   \sin{\alpha}  & \cos{\alpha } &0 \\
   0&0&0
      \end{array}\right), \;  \text{and} \; 
    A_2 =  \left( \begin{array}{ccc}
  \cos{\alpha } & 0  &-\sin{\alpha } \\
  0 &0&0 \\
   \sin{\alpha } &0 & \cos{\alpha}      \end{array}\right).      \]
$$ \text{Then } \; \{ \ebf_1, A_1 \ebf_1, A_1^2  \ebf_1, A_2 \ebf_1, A_2^2  \ebf_1\}  \; \text{is a strictly scalable frame for $\R^3$.}$$
\begin{center}
 \begin{tikzpicture}[scale=1.75]
    \draw [red] (-1,0) arc (180:360:1cm and 0.5cm);
    \draw[red, dashed] (-1,0) arc (180:0:1cm and 0.5cm);
    \draw [blue](0,1) arc (90:270:0.5cm and 1cm);
    \draw[blue, dashed] (0,1) arc (90:-90:0.5cm and 1cm);
    \draw (0,0) circle (1cm);
  \shade[ball color=blue!10!white,opacity=0.20] (0,0) circle (1cm);
    
    \draw[->] (-1.2,0)--(1.2, 0)node[right] {$y$}; 
    \draw[->] (0,-1.2) -- (0,1.2) node[above] {$z$};

     \draw[->] (0.5, 0.5) -- (-0.5, -0.5) node[left, below] {$x$}; 
      \draw[ultra thick,->] (0, 0) -- (-0.45, -0.45) node[above] {${\ebf}_1$}; 
     \draw[red, ultra thick, ->] (0, 0)--(0.93, 0.22 )node[right, above] {$A_1{\ebf}_1$}; 
     \draw[red, ultra thick, ->] (0, 0)--(-0.67, 0.38 )node[right, above] {$A^2_1{\ebf}_1$}; 
     \draw[blue, ultra thick, ->] (0, 0)--(0.28, -0.8 )node[right, above] {$A_2{\ebf}_1$}; 
   \draw[blue, ultra thick, ->] (0, 0)--(0.27, 0.88 )node[right, above] {$A^2_2{\ebf}_1$}; 
    \end{tikzpicture}
  \end{center}  

 \end{ex}

The following proposition is a generalization of the principle introduced in Example \ref{multigenscalable}:
  
 \begin{proposition}
Let $i, j, k,l \in \N$ be  such that $p<k \le n, \, q <l \le n$, and let $N\in \N$. 
For each $m=1, \ldots, N$, we define   $A^{pq}_{kl} (m) =[ a_{ij} (m) ]_{  i, j =1}^n$ as
$$
a_{pq} (m) = a_m,\, 
a_{pl} (m) = b_m,\, 
a_{kq} (m) = c_m,\, 
a_{kl} (m) = d_m.
$$
If for each $m =1, \ldots, N$, $a_m, b_m, c_m$ and $d_m$  satisfy the conditions of Corollary \ref{2scale}, and the system 
\begin{equation} \label{nicesystm}
 \{{\ebf}_1 \} \cup \cup_{m=1}^N \{  A^{pq}_{kl} (m) {\ebf}_1,  (A^{pq}_{kl} (m))^2 {\ebf}_1 \}\end{equation} spans $\R^n$, then 
\eqref{nicesystm} is a strictly scalable frame for $\R^n$. 
\end{proposition}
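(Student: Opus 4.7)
The plan is to reduce the $n$-dimensional problem to a collection of two-dimensional scalability problems, each solvable by Corollary \ref{2x3scale} applied to the $2 \times 2$ block $\left(\begin{smallmatrix} a_m & b_m \\ c_m & d_m\end{smallmatrix}\right)$ hidden inside each $A^{pq}_{kl}(m)$, and then to glue the resulting $2$D Parseval frames by sharing the starting vector $\ebf_1$. Following the geometry of Example \ref{multigenscalable}, the meaningful configuration is $p_m = q_m = 1$ and $k_m = l_m$ (otherwise $A(m)\ebf_1$ either vanishes or lies in a $2$D plane disjoint from $\ebf_1$, and the spanning hypothesis fails). Under this identification $A^{pq}_{kl}(m)$ restricts to the block above acting on $V_m := \mathrm{span}\{\ebf_1, \ebf_{k_m}\}$, and the triple $\{\ebf_1, A(m)\ebf_1, A(m)^2\ebf_1\}$ is exactly the system treated by Corollary \ref{2x3scale}.

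First I would invoke Corollary \ref{2x3scale} inside each block to obtain positive scalars $u_{m,0}, u_{m,1}, u_{m,2}$ with
\[
u_{m,0}^2\,\ebf_1\ebf_1^T + u_{m,1}^2 (A(m)\ebf_1)(A(m)\ebf_1)^T + u_{m,2}^2 (A(m)^2\ebf_1)(A(m)^2\ebf_1)^T = \Pi_{V_m},
\]
the orthogonal projector onto $V_m$. Then I would assemble a global scaling: pick a tightness constant $\mu > 0$, use $\sqrt{\mu}\,u_{m,j}$ as the weight on $A(m)^j\ebf_1$ ($j=1,2$), and place a single weight $w_0$ on the shared $\ebf_1$. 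Summing the rescaled Parseval identities, the combined frame operator equals
\[
\Bigl(w_0^2 - \mu\sum_{m=1}^{N} u_{m,0}^2\Bigr)\ebf_1\ebf_1^T + \mu\sum_{m=1}^{N} \Pi_{V_m}.
\]
The spanning hypothesis forces $\{k_m\}_m \supseteq \{2,\ldots,n\}$; in the generic case where each $k_m$ is distinct (so $N = n-1$), $\sum_m \Pi_{V_m} = I_n + (N-1)\ebf_1\ebf_1^T$, and requiring the above to equal $\mu I_n$ pins down $w_0^2 = \mu\bigl(\sum_m u_{m,0}^2 - (N-1)\bigr)$. Off-diagonal entries across distinct blocks vanish automatically, because frame vectors from different values of $m$ share no nonzero coordinate beyond the $\ebf_1$ component.

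The main obstacle is positivity of $w_0^2$. Corollary \ref{2x3scale} gives $u_{m,0}^2 = 1 - t_m$ with $t_m := -a_m(a_m^2 + b_mc_m)/[c_m^2(a_m + d_m)] \in (0,1)$, so the residual is $1 - \sum_m t_m$, and strict positivity reduces to the compatibility $\sum_m t_m < 1$ — a condition not stated explicitly but satisfied in Example \ref{multigenscalable} (where $t_1 = t_2 = 1/3$). For the nongeneric configurations (repeated $k_m$'s, or coincident $V_m$'s), I would replace the common $\mu$ by per-block constants $\mu_m > 0$ chosen so that the diagonal $(k_m, k_m)$-contributions balance to a common value across $m$; the residual on the $\ebf_1 \ebf_1^T$ entry is then absorbed by a suitably chosen $w_0$, closing the proof of strict scalability for $\R^n$.
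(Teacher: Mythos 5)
Your bookkeeping is essentially sound, and it takes a genuinely different route from the paper: the paper argues via diagram vectors, scaling each triple $\{\ebf_1, A^{pq}_{kl}(m)\ebf_1, (A^{pq}_{kl}(m))^2\ebf_1\}$ to a tight frame of its $2$-dimensional subspace and then asserting that the diagram vectors of the scaled triple sum to zero, so that summing over $m$ and invoking Theorem \ref{charTight} finishes the proof. Your frame-operator computation exposes exactly what that step glosses over: by Theorem \ref{charTight}, vanishing of the diagram-vector sum in $\R^n$ is equivalent to the scaled triple being tight for \emph{all} of $\R^n$, which a system spanning a $2$-dimensional subspace cannot be once $n\ge 3$ (and if the diagram vectors are instead taken inside each subspace, they live in different coordinate systems and cannot simply be added across $m$). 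Concretely, summing the per-block tight identities overcounts the $\ebf_1\ebf_1^T$ contribution, and the single weight available on the shared vector $\ebf_1$ must absorb the excess --- which is precisely your positivity constraint.

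The compatibility condition $\sum_m t_m<1$ that you flag is therefore not a defect of your method but a genuine gap in the statement and in the paper's proof. In the configuration $p=q=1$, $k=l$ with distinct $k_m$'s, the $(1,k_m)$ and $(k_m,k_m)$ equations force the weights on $A(m)\ebf_1$ and $(A(m))^2\ebf_1$ (up to the common tight constant $c$), so the $(1,1)$ equation gives $w_0^2=c\,(1-\sum_m t_m)$, and strict scalability of \eqref{nicesystm} holds if and only if $\sum_m t_m<1$; your formula $t_m=-a_m(a_m^2+b_mc_m)/(c_m^2(a_m+d_m))$ is the correct one for the block orientation in the proposition. The hypotheses do not imply this bound: $t_m$ can be pushed arbitrarily close to $1$ while the conditions of Theorem \ref{2scale}/Corollary \ref{2x3scale} hold (choose the block so that $A(m)\ebf_1$ and $(A(m))^2\ebf_1$ are nearly orthogonal with comparable norms, e.g.\ first column $(\epsilon,1)^T$ and second iterate $(1,-\epsilon-\eta)^T$ with $0<\eta\ll\epsilon$), so already two blocks in $\R^3$ can give $\sum_m t_m>1$, and the system is then not scalable at all. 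Thus your argument proves the proposition only under the added hypothesis $\sum_m t_m<1$ (satisfied in Example \ref{multigenscalable}), and no argument can remove it. Two smaller caveats: your closing treatment of non-generic configurations (repeated $k_m$'s) does not resolve the same positivity issue and would need an analogous aggregated condition, and in the generic case the per-block constants $\mu_m$ are forced by the diagonal entries, so they provide no additional freedom.
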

\begin{proof}
By Corollary \ref{2scale}, the set 
$ \{{\ebf}_1 \} \cup  \{  A^{pq}_{kl} (m) {\ebf}_1,  (A^{pq}_{kl} (m))^2 {\ebf}_1 \} $ is a scalable frame for  a 2-dimensional subspace for each $m =1, \ldots, N$. Thus, there exist  some suitable scaling coefficients  $x(m), y(m), z(m)$, and by Theorem \ref{charTight}, 
$$ \widetilde{x(m){\ebf}_1}   + \widetilde{y(m)A^{pq}_{kl} (m) {\ebf}_1} + \widetilde{z(m) A^{pq}_{kl} (m))^2 {\ebf}_1 }=0 .$$
This implies that the system \eqref{nicesystm}
is a scalable frame for $\R^n$.
\end{proof}
 {For a frame   generated  by iterative actions of  multiple operators, that is, a { \it multi-dynamical} frame, we find that its canonical dual frame is also multi-dynamical:}
 
 \begin{thm}
 Let $A_s$, $s \in I$, be   operators on a separable Hilbert space $\cH$, let $L_s \geq 0$, and  fix a  vector $\vbf \in \cH$. If     $\cup_{s \in I}  \{ A_s ^j \vbf \}_{j=0}^{L_s}$ is a      frame for $\cH$, with frame operator $S$, then its canonical dual frame is 
 \begin{equation}
 \cup_{s \in I}  \{ B_s ^j \fbf \}_{j=0}^{L_s},  
 \end{equation}$\text{where} \; \fbf = S^{-1}\vbf, \; \text{and} \;  B_s= S^{-1}A_s S,  \, s \in I.$
 
 \end{thm}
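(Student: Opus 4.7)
The plan is to imitate the proof of Theorem \ref{canondualframedysam} almost verbatim, treating each operator $A_s$ separately. The canonical dual frame of any frame $\{\phi_\alpha\}$ with frame operator $S$ is the sequence $\{S^{-1}\phi_\alpha\}$; hence, for the frame $\cup_{s\in I}\{A_s^j\vbf\}_{j=0}^{L_s}$, the canonical dual frame elements are exactly $S^{-1}(A_s^j\vbf)$ for $s \in I$ and $0\le j\le L_s$. The goal is therefore to show that, setting $\fbf = S^{-1}\vbf$ and $B_s = S^{-1}A_sS$, one has $S^{-1}(A_s^j\vbf) = B_s^j\fbf$ for every $s\in I$ and every $j\ge 0$.

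The key computational step is a telescoping identity: for each fixed $s$ and each $j\ge 1$,
\[
B_s^j = (S^{-1}A_sS)(S^{-1}A_sS)\cdots(S^{-1}A_sS) = S^{-1}A_s^j S,
\]
because the inner factors $S S^{-1}$ cancel. For $j=0$ the identity $B_s^0 = I = S^{-1}S$ holds trivially. Multiplying on the right by $\fbf = S^{-1}\vbf$ gives
\[
B_s^j \fbf = S^{-1}A_s^j S \cdot S^{-1}\vbf = S^{-1}(A_s^j\vbf),
\]
which is precisely the $(s,j)$-th canonical dual frame element.

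Since this identity holds for every $s\in I$ independently, collecting over $s$ yields the claimed description of the canonical dual frame as the multi-dynamical system $\cup_{s\in I}\{B_s^j\fbf\}_{j=0}^{L_s}$. There is no real obstacle here: the argument relies only on the existence and invertibility of the frame operator $S$ on $\cH$ (guaranteed by the hypothesis that the original system is a frame) and on the algebraic fact that conjugation by $S$ commutes with taking powers. In particular, the proof works in any separable Hilbert space and does not require any additional assumption on the operators $A_s$ (such as normality, commutativity among the $A_s$, or a relationship with $S$), which mirrors the single-operator case treated in Theorem \ref{canondualframedysam}.
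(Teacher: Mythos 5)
Your proof is correct and follows essentially the same route as the paper: identify the canonical dual elements as $S^{-1}A_s^j\vbf$, use the telescoping identity $B_s^j = S^{-1}A_s^j S$, and apply it to $\fbf = S^{-1}\vbf$. No issues to flag.
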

 \begin{proof}
 If $S$ denotes the frame operator of the frame $\cup_{s}  \{ A_s ^j \vbf \}_{j=0}^{L_s}$   for $\cH$, then its canonical dual frame elements are $S^{-1}A_s ^j \vbf$.  Since  $ B_s^j = S^{-1}A_s^j S$,  we obtain that the dual frame elements are 
  $$S^{-1}A_s ^j \vbf =S^{-1}A_s ^j S S^{-1} \vbf =S^{-1}A_s ^j S \fbf  = B_s^j \fbf.$$
 \end{proof}

 \section*{Acknowledgement}
 We express our gratitude to Professor S. Narayan for many helpful conversations on this work. 
 Kim was supported by the Central Michigan University FRCE Research Type A Grant \#C48143. Aceska was  supported by the BSU Aspire Research Grant ``Frame Theory and Modern Sampling Strategies''.


\begin{thebibliography}{75}
   
\bibitem{AADP13}
R. Aceska, A. Aldroubi, J. Davis, and A. Petrosyan,
Dynamical sampling in shift invariant spaces, 
{\it Contemp. Math. Of the AMS}, 603:139-148, 2013.

\bibitem{AT}
R. Aceska, and S. Tang,
Dynamical Sampling in Hybrid Shift-invariant Spaces,
{\it Contemp. Math. Of the AMS}, 626:   149-166, 2014.
% MR3329099 http://www.ams.org/bookstore/pspdf/conm-626-toc.pdf 

\bibitem{aprstsip} R. Aceska, A.  Petrosyan,   and S. Tang, 
Multidimensional Signal Recovery in Discrete Evolution Systems via Spatiotemporal Trade Off,
{\it  Sampling Theory in Signal and Image Processing},  14(2):  153-169, 2015. 
  
 
\bibitem{apssampta} R. Aceska, A.  Petrosyan,   and S. Tang, 
 Dynamical sampling of two-dimensional temporally-varying signals,
 {\it  Proc. 2015 Int. Conf. Sampling Theory and Applications (SampTA)},  440-443, IEEE, 2015. 
%http://ieeexplore.ieee.org/xpl/articleDetails.jsp?arnumber=7148929&refinements%3D4226682207%26filter%3DAND%28p_IS_Number%3A7148833%29
 
  
 
 
 
  
 
 
 

 \bibitem{ADK12}A. Aldroubi, J. Davis and I. Krishtal, { Dynamical Sampling: Time Space Trade-off}, \textit{Appl. Comput. Harmon. Anal.}, {\bf 34}(3), 495--503, 2013. 
 
 \bibitem{ADK13} A. Aldroubi, J. Davis and I. Krishtal, {Exact Reconstruction of Signals in Evolutionary Systems Via Spatiotemporal Trade-off}, \textit{J. Fourier Anal. Appl.}, {\bf 21}(1), 11--31, 2015.
  \bibitem{ACMT}
A. Aldroubi, C. Cabrelli, U. Molter, and Sui Tang,
Dynamical sampling, 
Submitted. Available at http://arxiv.org/abs/1409.8333.
 

 \bibitem{AP}
   A. Aldroubi, and  A. Petrosyan,
 Dynamical sampling and systems from iterative actions of operators,
 Submitted. Available at http://arxiv.org/abs/1606.03136.
 
 \bibitem{ADK12}A. Aldroubi, J. Davis and I. Krishtal, { Dynamical Sampling: Time Space Trade-off}, \textit{Appl. Comput. Harmon. Anal.}, {\bf 34}(3), 495--503, 2013. 
 
 \bibitem{ADK13} A. Aldroubi, J. Davis and I. Krishtal, {Exact Reconstruction of Signals in Evolutionary Systems Via Spatiotemporal Trade-off}, \textit{J. Fourier Anal. Appl.}, {\bf 21}(1), 11--31, 2015. 
 \bibitem{ACAMP}
  A. Aldroubi, C. Cabrelli, A. F. Cakmak, U. Molter,  and A. Petrosyan,
  Iterative actions of normal operators, 
  Submitted. Available at http://arxiv.org/abs/1602.04527.
  
%  \bibitem{ACM12}   
% B. Alexeev, J. Cahill, and D.G. Mixon, 
% Full spark frames,
%  {\it J. Fourier Anal. Appl.}, 18(6):1167-1194, 2012.
  
 \bibitem {BM03} J. J. Benedetto and M. Fickus, 
 Finite Normalized Tight Frames, 
{\it  Adv. Comput. Math.}, 18:357-385, 2003.
  
   \bibitem{CC13}
J. Cahill and X. Chen, 
A note on scalable frames, 
{\it Proceedings of the 10th International
Conference on Sampling Theory and Applications}, 93-96, 2013.


  
  
   
   
   
  
  \bibitem{CFHWZ12}
 P.G. Casazza,   M.  Fickus, A. Heinecke,  Y. Wang, and  Z. Zhou, 
Spectral tetris fusion frame constructions, 
{\it J. Fourier Anal. Appl.}, 18(4): 828-851, 2012.

\bibitem{CK04} 
  P. G. Casazza and G. Kutyniok,
  Frames of Subspaces, 
  {\it Contemp. Math. Of the AMS}, 345:87-113, 2004.




  
  \bibitem{CKLMNPS14}
 M. Copenhaver, 
Y. Kim, 
C. Logan, 
K. Mayfield, 
S. K. Narayan,  
M. J. Petro, and 
J. Sheperd,  
Diagram vectors and tight frame scaling in finite dimensions, 
{\it Oper. Matrices}, 8(1):78-88, 2014.

 \bibitem{CKLMNS13} 
M. Copenhaver, 
Y. Kim, 
C. Logan, 
K. Mayfield, 
S. K. Narayan,  and 
J. Sheperd,  
Maximum Robustness and  surgery of frames in finite dimensions, 
{\it Linear Algebra Appl.}, 439(5):1330-1339, 2013.

\bibitem{CKOPW15}
X. Chen, G. Kutyniok, K. A. Okoudjou, F. Philipp, and R. Wang, 
 Measures of scalability, 
{\it IEEE Trans. Inf. Theory},  61(8):4410-4423, 2015.


\bibitem{CMKLT06} 
P. Casazza,  M. Fickus, J. Kova\v{c}evi\'{c},  M.T. Leon, and J.C. Tremain, 
A physical interpretation of finite frames, 
{\it Appl. Numer. Harmon. Anal.}, 2-3:51-76, 2006.


\bibitem{DKN15}
R. Domagalski, Y. Kim, and S. K. Narayan,  
On minimal scalings of scalable frames, 
{\it Proceedings of the 11th International
Conference on Sampling Theory and Applications}, 91-95, 2015.

\bibitem{Gro01} 
K. Groechenig,
{\it Foundations of time-frequency analysis}, 
Birkh\"auser Boston, 2001.

\bibitem{HJ85}
R. A. Horn and C. R. Johnson, 
{\it Matrix Analysis}, 
Cambridge University Press, 1985.


\bibitem{HKLW07}
D. Han, K.  Kornelson, D. Larson, and E. Weber, 
{\it Frames for undergraduates}, 
Student Mathematical Library, 40,  American Mathematical Society, Providence, RI, 2007.



\bibitem{KOF13}
 G. Kutyniok, K. Okoudjou, and F. Philipp, 
 Scalable frames and convex geometry, 
{\it Contemp. Math.},  345, 2013.

\bibitem{KOPT13}
G. Kutyniok, K. A. Okoudjou, F.  Philipp, and E. K. Tuley, 
Scalable frames, 
{\it Linear Algebra Appl.}, 438:2225-2238, 2013.

 \bibitem{LV09}
 {\sc Y.~Lu and M.~Vetterli}.   Spatial super-resolution of a diffusion field
   by temporal oversampling in sensor networks, {\it IEEE International Conference on  Acoustics, Speech and Signal
   Processing, 2009. ICASSP 2009.}  
   2249-2252, 2009.
 
 \bibitem{RCLV11}
 {  J.~Ranieri, A.~Chebira, Y.~M. Lu, and M.~Vetterli}.  Sampling and
   reconstructing diffusion fields with localized sources, {\it  IEEE International Conference on  Acoustics, Speech
   and Signal Processing (ICASSP) 2011}, 4016-4019, 2011.
   
 \bibitem{STDH07}
M. A. Sustik,  J. A. Tropp, I. S. Dhillon, R. and W. Heath, 
On the existence of equiangular tight frames, 
{\it Linear Algebra Appl.}, 426:619-635, 2007.
 
 
 
 
   
%\bibitem{roza2} R Aceska, S Tang. {\it Dynamical sampling in hybrid shift invariant spaces.} 
%Operator Methods in Wavelets, Tilings, and Frames 626, 149, 2014
%
%\bibitem{roza3} R Aceska, A Petrosyan, S Tang. {\it Dynamical sampling of two-dimensional temporally-varying signals.} 
%Sampling Theory and Applications (SampTA), 2015 International Conference on ...	 	2015
%  
%
%\bibitem{roza4} R Aceska, A Petrosyan, S Tang.  {\it Multidimensional signal recovery in discrete evolution systems via spatiotemporal trade off. } 
%Sampling Theory in Signal and Image Processing 14(2):153-169, 2015 %with 30 Reads

%\bibitem{akram4} Akram Aldroubi, Jacqueline Davis, and Ilya Krishtal. {\it Exact reconstruction of signals in evolutionary systems via
%spatiotemporal trade-off.} J. Fourier Anal. Appl., 21:11–31, 2015.


%  \bibitem{Ole} O. Christensen. An Introduction to Frames and Riesz Bases.
%  Birkhäuser, 2003 
  
%  \bibitem{JPMS} J. Cahill, P. Casazza, M. Ehler and S. Li. 
%  Tight and random nonorthogonal fusion frames.
%  arxiv: 1309.0532v1
 
%  \bibitem{DGS} 
%  P. G. Casazza, G. Kutyniok and Shidong Li. {\it Fusion frames and distributed processing.} ACHA 25, pp.114-132, Elsevier, 2008. 

%\bibitem{companionref} \textcolor{red}{Companion operators reference}

 
   \end{thebibliography}
   \end{document}